\title{A Generalized ANOVA Dimensional Decomposition for Dependent Probability Measures \thanks{This work was supported by the U.S. National Science Foundation under Grant Numbers CMMI-0969044 and CMMI-1130147.}}
\author{Sharif Rahman\thanks{Applied Mathematics \& Computational Sciences, The University of Iowa, Iowa City, IA 52242 ({\tt sharif-rahman@uiowa.edu}).}}
\begin{document}

\maketitle

\begin{abstract}
This article explores the generalized analysis-of-variance or ANOVA
dimensional decomposition (ADD) for multivariate functions of dependent
random variables. Two notable properties, stemming from weakened annihilating
conditions, reveal that the component functions of the generalized
ADD have \emph{zero} means and are hierarchically orthogonal. By exploiting
these properties, a simple, alternative approach is presented to derive
a coupled system of equations that the generalized ADD component functions
satisfy. The coupled equations, which subsume as a special case the
classical ADD, reproduce the component functions for independent
probability measures. To determine the component functions of the
generalized ADD, a new constructive method is proposed by employing
measure-consistent, multivariate orthogonal polynomials as bases and
calculating the expansion coefficients involved from the solution
of linear algebraic equations. New generalized formulae are presented
for the second-moment characteristics, including triplets of global
sensitivity indices, for dependent probability distributions. Furthermore,
the generalized ADD leads to extended definitions of effective dimensions,
reported in the current literature for the classical ADD. Numerical
results demonstrate that the correlation structure of random variables
can significantly alter the composition of component functions, producing
widely varying global sensitivity indices and, therefore, distinct
rankings of random variables. An application to random eigenvalue analysis demonstrates
the usefulness of the proposed approximation.
\end{abstract}

\begin{keywords}
ADD, ANOVA, dimensional decomposition, multivariate orthogonal polynomials,
uncertainty quantification, global sensitivity analysis
\end{keywords}

\begin{AMS}
26B49, 41A61, 49K30, 60H35, 65C60
\end{AMS}

\pagestyle{myheadings}
\thispagestyle{plain}
\markboth{S. RAHMAN}{GENERALIZED ANOVA DIMENSIONAL DECOMPOSITION}

\section{Introduction}

Uncertainty quantification of complex systems, whether natural or engineered, is a crucial ingredient in numerous fields of engineering, science, and medicine. The remarkable growth of computing power, complemented by matching gains in algorithmic speed and accuracy, has led to near-ubiquity of computational methods for estimating the statistical moments, probability laws, and other relevant properties of such systems. However, most existing methods \cite{deb01,holtz08,wiener38}, while successful in tackling small to moderate numbers of random variables, begin to break down for truly high-dimensional problems. The root deterrence to practical computability is often related to high dimension of the multivariate integration or interpolation problem, known as the curse of dimensionality \cite{bellman57}. The dimensional decomposition of a multivariate function \cite{hoeffding48,sobol69,rahman12,kuo10} addresses the curse of dimensionality to some extent by developing an input-output behavior of complex systems with low effective dimensions \cite{caflisch97}, wherein the degrees of interactions between input variables attenuate rapidly or vanish altogether.

A well-known prototype of dimensional decomposition is the analysis-of-variance
or ANOVA dimensional decomposition (ADD), first presented by Hoeffding
in the 1940s in relation to his seminal work on $U$-statistics \cite{hoeffding48}.
Since then, ADD has been studied by numerous researchers in disparate
fields of mathematics \cite{sobol03,hickernell95}, statistics \cite{owen03,efron81},
finance \cite{griebel10}, and basic and applied sciences \cite{rabitz99},
including engineering disciplines, where its polynomial versions have
been successfully applied for uncertainty quantification of high-dimensional
complex systems \cite{rahman08,rahman09,yadav13}. However, the existing
ADD, referred to as the classical ADD in this paper, is strictly
valid for independent, product-type probability measures of random
input. In reality, there may exist significant correlation or dependence
among input variables. The author rules out the Rosenblatt transformation
\cite{rosenblatt52} or others commonly used for mapping dependent
to independent variables, as they may induce overly large nonlinearity
to a stochastic response, potentially degrading the convergence properties
of probabilistic solutions \cite{rahman09}. Therefore, the classical
ADD must be generalized for an arbitrary, non-product-type probability
measure. Doing so will require modifying the original annihilating
conditions that will endow desirable orthogonal properties, insofar as is
possible, to the generalization. Indeed, inspired by Stone \cite{stone94} and
employing a set of weakened
annihilating conditions, Hooker \cite{hooker07} provided an existential
proof of a unique ANOVA decomposition for dependent variables, referred
to as the generalized ADD in this paper, subject to a mild restriction
on the probability measure. Furthermore, he introduced a computational
method for determining the component functions of the generalized
ADD by minimizing a mean-squared error with weakened annihilating
conditions as constraints. However, the method turns out to be computationally
demanding and even potentially prohibitive when there exist a moderate
number of variables. Chastaing \emph{et al}. \cite{chastaing12} presented a boundedness
assumption on the joint probability density function of random variables for the availability
of a generalized ANOVA decomposition. Li and Rabitz \cite{li12} proposed combining
hierarchically selected univariate orthogonal polynomials and regression, applied to input-output data,
to approximate the component functions of the generalized ADD.

The purpose of this paper is threefold. Firstly, a brief exposition
of the classical ADD is given in Section 3, setting the stage for
the generalized ADD presented in Section 4. Two propositions and a
theorem, proven herein, reveal two special properties of the generalized
ADD, leading to a coupled system of equations satisfied by the component
functions. These theoretical results, which subsume the classical
ADD as a special case, are shown to reproduce the component functions
for independent probability measures. Secondly, Section 5 introduces
general multivariate orthogonal polynomials that are consistent with
the probability measures of dependent input variables. A theorem and
its proof presented in this section describe a new constructive method
for finding the component functions of the generalized ADD in terms
of measure-consistent, multivariate orthogonal polynomials. Thirdly,
the second-moment analysis of the generalized ADD is described in
Section 6. It entails global sensitivity analysis, including triplets
of sensitivity indices, for dependent probability distributions. Using
insights from the generalized ADD, extended definitions of two effective
dimensions are proposed. Numerical results, including approximate solutions of a random
eigenvalue problem, are reported in Section 7 to affirm the theoretical
findings. Mathematical notations and conclusions are defined or drawn
in Sections 2 and 8, respectively.


\section{Notation}

Let $\mathbb{N}$, $\mathbb{N}_{0}$, $\mathbb{R}$, and $\mathbb{R}_{0}^{+}$
represent the sets of positive integer (natural), non-negative integer,
real, and non-negative real numbers, respectively. For $k\in\mathbb{N}$,
denote by $\mathbb{R}^{k}$ the $k$-dimensional Euclidean space and
by $\mathbb{R}^{k\times k}$ the set of $k\times k$ real-valued matrices.
These standard notations will be used throughout the paper.

Let $(\Omega,\mathcal{F},P)$ be a complete probability space, where
$\Omega$ is a sample space, $\mathcal{F}$ is a $\sigma$-field on
$\Omega$, and $P:\mathcal{F}\to[0,1]$ is a probability measure.
With $\mathcal{B}^{N}$ representing the Borel $\sigma$-field on
$\mathbb{R}^{N}$, $N\in\mathbb{N}$, consider an $\mathbb{R}^{N}$-valued
random vector $\mathbf{X}:=(X_{1},\cdots,X_{N}):(\Omega,\mathcal{F})\to(\mathbb{R}^{N},\mathcal{B}^{N})$,
describing the statistical uncertainties in all system and input parameters
of a high-dimensional stochastic problem. The probability law of $\mathbf{X}$, assumed to be
continuous, is completely defined by its joint probability density function $f_{\mathbf{X}}:\mathbb{R}^{N}\to\mathbb{R}_{0}^{+}$.
Let $u$ be a subset of $\{1,\cdots,N\}$ with the complementary set
$-u:=\{1,\cdots,N\}\backslash u$ and cardinality $0\le|u|\le N$,
and let $\mathbf{X}_{u}=(X_{i_{1}},\cdots,X_{i_{|u|}})$, $u\neq\emptyset$,
$1\leq i_{1}<\cdots<i_{|u|}\leq N$, be a subvector of $\mathbf{X}$
with $\mathbf{X}_{-u}:=\mathbf{X}_{\{1,\cdots,N\}\backslash u}$ defining
its complementary subvector. Then, for a given $\emptyset\neq u\subseteq\{1,\cdots,N\}$,
the marginal density function of $\mathbf{X}_{u}$ is $f_{u}(\mathbf{x}_{u}):=\int_{\mathbb{R}^{N-|u|}}f_{\mathbf{X}}(\mathbf{x})d\mathbf{x}_{-u}$.

Let $y(\mathbf{X}):=y(X_{1},\cdots,X_{N}$), a real-valued, measurable
transformation on $(\Omega,\mathcal{F})$, define a high-dimensional
stochastic response of interest and $\mathcal{L}_{2}(\Omega,\mathcal{F},P)$
represent a Hilbert space of square-integrable functions $y$ with
respect to the induced generic measure $f_{\mathbf{X}}(\mathbf{x})d\mathbf{x}$
supported on $\mathbb{R}^{N}$. Although it is well known in the current
literature, Section 3 briefly describes the classical ADD, so that
it can be contrasted with the generalized ADD presented in Section
4, the main theme of this paper.

\section{Classical ANOVA Dimensional Decomposition}

The classical ADD is established by assuming independent coordinates
of $\mathbf{X}$ and selecting a product-type probability measure,
$f_{\mathbf{X}}(\mathbf{x})d\mathbf{x}=\Pi_{i=1}^{N}f_{\{i\}}(x_{i})dx_{i}$,
of $\mathbf{X}$, where $f_{\{i\}}:\mathbb{R}\to\mathbb{R}_{0}^{+}$
is the marginal probability density function of $X_{i}$, defined
on the probability triple $(\Omega_{i},\mathcal{F}_{i},P_{i})$ with
a bounded or an unbounded support on $\mathbb{R}$. The ADD, expressed
by the compact form \cite{sobol03,rahman12,kuo10}
\begin{equation}
y(\mathbf{X})={\displaystyle \sum_{u\subseteq\{1,\cdots,N\}}y_{u,C}(\mathbf{X}_{u})},\label{1}
\end{equation}
is a finite, hierarchical expansion in terms of its input variables
with increasing dimensions, where $y_{u,C}$ is a $|u|$-variate component
function describing a constant or the interactive effect of $\mathbf{X}_{u}$
on $y$ when $|u|=0$ or $|u|>0$. The symbol $C$ in the subscript
of $y_{u,C}$ is a reminder that the component functions belong to
the classical ADD. The summation in Equation \ref{1} comprises $2^{N}$
component functions, with each function depending on a group of variables
indexed by a particular subset of $\{1,\cdots,N\}$, including the
empty set $\emptyset$. Applying strong annihilating conditions, the
component functions are endowed with desirable orthogonal properties,
explained as follows.

\subsection{Strong Annihilating Conditions}

The strong annihilating conditions relevant to the classical ADD require
all non-constant component functions $y_{u,C}$ to integrate to \emph{zero}
with respect to the marginal density of each random variable in $u$,
that is \cite{sobol93, rabitz99,rahman12,kuo10},
\begin{equation}
\int_{\mathbb{R}}y_{u,C}(\mathbf{x}_{u})f_{\{i\}}(x_{i})dx_{i}=0\;\mathrm{for}\; i\in u\neq\emptyset,\label{2}
\end{equation}
resulting in two remarkable properties, described by Propositions
\ref{prop:1} and \ref{prop:2}.
\begin{proposition}
\label{prop:1}The classical ADD component functions $y_{u,C}$, where
$\emptyset\ne u\subseteq\{1,\cdots,N\}$, have zero means, i.e.,
\[
\mathbb{E}\left[y_{u,C}(\mathbf{X}_{u})\right]=0.
\]
\end{proposition}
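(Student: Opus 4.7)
The plan is to compute $\mathbb{E}[y_{u,C}(\mathbf{X}_u)]$ directly as a multivariate integral and reduce it to the one-dimensional annihilating condition \eqref{2} by appealing to the product-measure structure of the classical ADD setting. Since, by assumption, the coordinates of $\mathbf{X}$ are independent, the marginal density of $\mathbf{X}_u$ factorizes as $f_u(\mathbf{x}_u)=\prod_{i\in u}f_{\{i\}}(x_i)$, so the expectation becomes an iterated integral over $\mathbb{R}^{|u|}$ against $\prod_{i\in u}f_{\{i\}}(x_i)\,dx_i$.

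First, I would fix an arbitrary index $i^*\in u$ (which exists because $u\neq\emptyset$). Next, I would invoke Fubini's theorem, justified by the fact that $y\in\mathcal{L}_2(\Omega,\mathcal{F},P)$ together with the product form of the measure, to carry out the integration over $x_{i^*}$ innermost. At that point, the inner one-dimensional integral is exactly $\int_{\mathbb{R}}y_{u,C}(\mathbf{x}_u)f_{\{i^*\}}(x_{i^*})\,dx_{i^*}$, which is zero by the strong annihilating condition \eqref{2}. Integrating zero against the remaining factors $\prod_{i\in u\setminus\{i^*\}}f_{\{i\}}(x_i)\,dx_i$ then gives $\mathbb{E}[y_{u,C}(\mathbf{X}_u)]=0$, completing the argument.

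There is no real obstacle here; the whole statement is essentially a one-line consequence of independence plus \eqref{2}. The only item worth flagging is the justification for exchanging the order of integration, which requires the component functions to be integrable with respect to the relevant marginal measure. This is ensured by the standing assumption $y\in\mathcal{L}_2(\Omega,\mathcal{F},P)$, from which each $y_{u,C}$ inherits square-integrability (and hence, by the Cauchy--Schwarz inequality applied against the probability density, absolute integrability) with respect to $f_u(\mathbf{x}_u)\,d\mathbf{x}_u$.
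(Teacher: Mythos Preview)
Your argument is correct and is essentially the standard one: pick an index $i^*\in u$, use the product structure of the measure to iterate the integral, and kill the inner integral with the strong annihilating condition \eqref{2}. The paper does not actually write out a proof of Proposition~\ref{prop:1} (it is stated as a known property of the classical ADD), but your approach coincides exactly with the proof the paper gives for the dependent-variable analogue, Proposition~\ref{prop:3}.
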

\begin{proposition}
\label{prop:2}Two distinct classical ADD component functions $y_{u,C}$
and $y_{v,C}$, where $\emptyset\ne u\subseteq\{1,\cdots,N\}$, $\emptyset\ne v\subseteq\{1,\cdots,N\}$,
and $u\neq v$, are orthogonal, i.e., they satisfy the property
\[
\mathbb{E}\left[y_{u,C}(\mathbf{X}_{u})y_{v,C}(\mathbf{X}_{v})\right]=0.
\]
\end{proposition}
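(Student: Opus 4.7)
The plan is to exploit the product structure of the measure together with the strong annihilating conditions \eqref{2} to integrate out a single variable and force the inner product to vanish. Since $u \neq v$, at least one of the symmetric differences $u\setminus v$ or $v\setminus u$ is nonempty; without loss of generality I would assume there exists an index $i \in u\setminus v$ (the reverse case is identical after swapping the roles of $u$ and $v$).

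First I would write the expectation as an $N$-fold integral against the product density,
\begin{equation*}
\mathbb{E}\left[y_{u,C}(\mathbf{X}_{u})\,y_{v,C}(\mathbf{X}_{v})\right]
 = \int_{\mathbb{R}^{N}} y_{u,C}(\mathbf{x}_{u})\,y_{v,C}(\mathbf{x}_{v})\,\prod_{j=1}^{N} f_{\{j\}}(x_{j})\,dx_{j}.
\end{equation*}
Since $y \in \mathcal{L}_{2}(\Omega,\mathcal{F},P)$ and hence each component function is square-integrable, the Cauchy--Schwarz inequality guarantees that the integrand is absolutely integrable, so Fubini's theorem applies and the order of integration may be freely rearranged.

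Next I would single out the integral over $x_{i}$. Because $i \notin v$, the factor $y_{v,C}(\mathbf{x}_{v})$ is independent of $x_{i}$, and because the density factorizes, I can pull $y_{v,C}$ and every $f_{\{j\}}$ with $j\ne i$ outside the innermost integral:
\begin{equation*}
\mathbb{E}\left[y_{u,C}(\mathbf{X}_{u})\,y_{v,C}(\mathbf{X}_{v})\right]
 = \int_{\mathbb{R}^{N-1}} y_{v,C}(\mathbf{x}_{v})
 \left[\int_{\mathbb{R}} y_{u,C}(\mathbf{x}_{u})\,f_{\{i\}}(x_{i})\,dx_{i}\right]
 \prod_{j\ne i} f_{\{j\}}(x_{j})\,dx_{j}.
\end{equation*}
The bracketed integral vanishes identically by the strong annihilating condition \eqref{2}, since $i \in u$. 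Hence the entire iterated integral equals zero, establishing the claim.

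The only subtle point I anticipate is the technical justification that permits pulling the one-dimensional integral inside; this rests on the square-integrability of the component functions and the product-measure hypothesis, both of which are standing assumptions in this section. Once Fubini is in hand, the proof is a one-line application of \eqref{2}, and no case analysis beyond the WLOG choice of $i$ is required.
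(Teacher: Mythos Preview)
Your argument is correct and is the standard one. The paper does not actually supply a proof of Proposition~\ref{prop:2}; it is stated as a known consequence of the strong annihilating conditions~\eqref{2}. Your approach---pick $i\in u\setminus v$ (available by $u\neq v$ and symmetry), use Fubini and the product form of the measure to isolate the $x_i$-integral, then invoke~\eqref{2}---is exactly the expected proof, and it mirrors the argument the paper \emph{does} write out for the analogous Proposition~\ref{prop:4} in the dependent setting (there restricted to $v\subset u$).
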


Integrating (\ref{1}) with respect to the measure $f_{-u}(\mathbf{x}_{-u})d\mathbf{x}_{-u}=\prod_{i=1,i\notin u}^{N}{\displaystyle {\displaystyle {\textstyle f_{\{i\}}(x_{i})}}dx_{i}}$,
that is, over all variables except $\mathbf{x}_{u}$, and using (\ref{2})
yields the component functions \cite{sobol03,rahman12,kuo10}
\begin{subequations}
\begin{align}
y_{\emptyset,C} & =\int_{\mathbb{R}^{N}}y(\mathbf{x})\prod_{i=1}^{N}{\displaystyle {\displaystyle {\textstyle f_{\{i\}}(x_{i})}}dx_{i}},\label{3a}\\
y_{u,C}(\mathbf{X}_{u}) & ={\displaystyle \int_{\mathbb{R}^{N-|u|}}y(\mathbf{X}_{u},\mathbf{x}_{-u})}\prod_{i=1,i\notin u}^{N}{\displaystyle {\displaystyle {\textstyle f_{\{i\}}(x_{i})}}dx_{i}}-{\displaystyle \sum_{v\subset u}}y_{v,C}(\mathbf{X}_{v}).\label{3b}
\end{align}
\label{3}
\end{subequations}
\!\!In Equation \ref{3b}, $(\mathbf{X}_{u},\mathbf{x}_{-u})$
denotes an $N$-dimensional vector whose $i$th component is $X_{i}$
if $i\in u$ and $x_{i}$ if $i\notin u.$ When $u=\emptyset$, the
sum in (\ref{3b}) vanishes, resulting in the expression of the constant
function $y_{\emptyset,C}$ in (\ref{3a}). When $u=\{1,\cdots,N\}$,
the integration in the last line of (\ref{3b}) is on the empty set,
reproducing Identity (\ref{1}) and hence finding the last function
$y_{\{1,\cdots,N\},C}$. Indeed, all component functions of $y$ in
(\ref{1}) can be obtained by interpreting literally (\ref{3b}).

Traditionally, (\ref{1}), (\ref{3a}), and (\ref{3b}) with $X_{j}$,
$j=1,\cdots,N$, following independent, standard uniform distributions,
that is, $f_{\{i\}}=1$, are identified as the classical ANOVA decomposition
\cite{sobol03}. However, recent works reveal no fundamental requirement
for a specific probability measure of $\mathbf{X}$, provided that
the resultant integrals in (\ref{3a}) and (\ref{3b}) exist and
are finite \cite{rahman12}. This generalization is trivial as long
as $\mathbf{X}$ is endowed with a product-type probability measure.

\subsection{Second-Moment Statistics}

Applying the expectation operators on $y(\mathbf{X})$ in (\ref{1})
and $(y(\mathbf{X})-\mu)^{2}$ and recognizing Propositions \ref{prop:1}
and \ref{prop:2}, the mean of $y$ is
\begin{equation}
\mu:=\mathbb{E}\left[y(\mathbf{X})\right]=y_{\emptyset,C},\label{5}
\end{equation}
whereas its variance
\begin{equation}
\sigma^2:=\mathbb{E}\left[\left(y(\mathbf{X})-\mu\right)^{2}\right]=\sum_{\emptyset\ne u\subseteq\{1,\cdots,N\}}\mathbb{E}\left[y_{u,C}^{2}(\mathbf{X}_{u})\right]\label{6}
\end{equation}
splits into variances of all \emph{zero-}mean, non-constant component
functions of $y$. According to (\ref{6}), the variance decomposition
follows the same structure of $y-y_{\emptyset,C}$ from (\ref{1}),
explaining why the acronym\textquoteleft{}\textquoteleft{}ANOVA\textquoteright{}\textquoteright{}is
also coined for the function decomposition.

\section{Generalized ANOVA Dimensional Decomposition}

Consider a dependent random vector with an arbitrary non-product type
probability density function $f_{\mathbf{X}}:\mathbb{R}^{N}\to\mathbb{R}_{0}^{+}$
that has marginal probability density function $f_{u}$ of $\mathbf{X}_{u}$,
where $\emptyset\ne u\subseteq\{1,\cdots,N\}$. Assume that the support
of $f_{\mathbf{X}}$ is grid-closed \cite{hooker07}. The grid closure
implies that there exists a grid for every point $\mathbf{x}$ of
$\mathrm{supp}(f_{\mathbf{X}})\subseteq\mathbb{R}^{N}$, that is,
for any point $\mathbf{x}\in\mathrm{supp}(f_{\mathbf{X}})$, one can
traverse in each coordinate direction and find another point $\mathbf{x}'\in\mathrm{supp}(f_{\mathbf{X}})$.
Under this mild regularity requirement, fulfilled by common probability
distributions, a square-integrable multivariate function $y$ with
respect to the marginal probability measure $f_{u}(\mathbf{x}_{u})d\mathbf{x}_{u}$
supported on $\mathbb{R}^{|u|}$ also admits a unique, finite, hierarchical
expansion \cite{hooker07}
\begin{equation}
y(\mathbf{X})={\displaystyle \sum_{u\subseteq\{1,\cdots,N\}}y_{u,G}(\mathbf{X}_{u})},\label{7}
\end{equation}
referred to as the generalized ADD, in terms of component functions
$y_{u,G}$, $u\subseteq\{1,\cdots,N\}$, of input variables with increasing
dimensions. The existence and uniqueness of the decomposition in (\ref{7}) have been proven under conditions (C.1) and (C.2) \cite{chastaing12}, but (\ref{7}) works well in practice under more general assumptions \cite{hooker07,stone94}. The symbol $G$ in the subscript of $y_{u,G}$ is meant
to distinguish the component functions of the generalized ADD from
those of the classical ADD. Similar to the classical ADD, the summation
in (\ref{7}) comprises $2^{N}$ component functions, with each function
depending on a group of variables indexed by a particular subset of
$\{1,\cdots,N\}$, including the empty set $\emptyset$. However,
the component functions of the generalized ADD, different than those
of the classical ADD, cannot be derived from the strong annihilating
conditions expressed by (\ref{2}). This is because some of the orthogonal
properties that stem from (\ref{2}) cannot be duplicated when the
random variables are dependent. Having said so, the functions $y_{u,G}$,
$u\subseteq\{1,\cdots,N\}$, can also be obtained from a similar perspective
by adjusting classical annihilating conditions, described as follows.

\subsection{Weak Annihilating Conditions}

The weak annihilating conditions appropriate for the generalized ADD
mandate all non-constant component functions $y_{u,G}$ to integrate
to \emph{zero} with respect to the marginal density of $\mathbf{X}_{u}$
in each coordinate direction of $u$, that is \cite{hooker07},
\begin{equation}
\int_{\mathbb{R}}y_{u,G}(\mathbf{x}_{u})f_{u}(\mathbf{x}_{u})dx_{i}=0\;\mathrm{for}\; i\in u\neq\emptyset.\label{8}
\end{equation}
Compared with (\ref{2}), (\ref{8}) represents a milder version,
but it still produces two remarkable properties of the generalized
ADD, described by Propositions \ref{prop:3} and \ref{prop:4}.
\begin{proposition}
\label{prop:3}The generalized ADD component functions $y_{u,G}$,
where $\emptyset\ne u\subseteq\{1,\cdots,N\}$, have zero means, i.e.,
\[
\mathbb{E}\left[y_{u,G}(\mathbf{X}_{u})\right]=0.
\]
\end{proposition}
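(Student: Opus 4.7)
The plan is to reduce the statement directly to the weak annihilating condition (\ref{8}) by an application of Fubini's theorem. Since $u \neq \emptyset$, we may pick any index $i \in u$. Then the expectation of interest factors as
\begin{equation*}
\mathbb{E}\left[y_{u,G}(\mathbf{X}_{u})\right] = \int_{\mathbb{R}^{|u|}} y_{u,G}(\mathbf{x}_{u}) f_{u}(\mathbf{x}_{u}) d\mathbf{x}_{u} = \int_{\mathbb{R}^{|u|-1}} \left[\int_{\mathbb{R}} y_{u,G}(\mathbf{x}_{u}) f_{u}(\mathbf{x}_{u}) dx_{i}\right] d\mathbf{x}_{u\setminus\{i\}}.
\end{equation*}
By (\ref{8}), the bracketed inner integral vanishes for every choice of $\mathbf{x}_{u\setminus\{i\}}$, so the outer integral is identically zero and the claim follows.

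The only technical point to justify is the interchange of the order of integration, which is immediate from Fubini's theorem once we observe that $y_{u,G}$ is square-integrable with respect to $f_{u}(\mathbf{x}_{u}) d\mathbf{x}_{u}$ (this being a consequence of the square-integrability of $y$ assumed just before Equation (\ref{7})) and that $f_{u}$ is a bona fide marginal density on $\mathbb{R}^{|u|}$. In particular, integrability of $|y_{u,G}| f_{u}$ on $\mathbb{R}^{|u|}$ follows from Cauchy--Schwarz, since $\int |y_{u,G}| f_{u} \, d\mathbf{x}_{u} \le (\int y_{u,G}^{2} f_{u} \, d\mathbf{x}_{u})^{1/2}$.

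There is essentially no obstacle here; the proposition is the natural one-line consequence of the weakened annihilating hypothesis. The only thing that could go wrong would be a failure of Fubini, which is ruled out by the standing hypotheses. I expect the author's proof to consist of exactly the display above, possibly with a short remark identifying the arbitrary choice of $i \in u$. Note that the analogous classical statement (Proposition \ref{prop:1}) can be recovered from the same argument by specializing $f_{u}$ to the product measure $\prod_{j\in u} f_{\{j\}}$, in which case (\ref{8}) degenerates to (\ref{2}).
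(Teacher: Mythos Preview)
Your proof is correct and matches the paper's own argument essentially line for line: pick $i\in u$, write $\mathbb{E}[y_{u,G}(\mathbf{X}_u)]=\int_{\mathbb{R}^{|u|}} y_{u,G}f_u\,d\mathbf{x}_u$, iterate the integral with $dx_i$ innermost, and invoke (\ref{8}). The only cosmetic difference is that the paper begins from the full $N$-dimensional expectation before marginalizing to $f_u$, and does not spell out the Fubini justification you added.
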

\begin{proof}
For any subset $\emptyset\ne u\subseteq\{1,\cdots,N\}$, let $i\in u$.
Then
\begin{align*}
\mathbb{E}\left[y_{u,G}(\mathbf{X}_{u})\right] & :=\int_{\mathbb{R}^{N}}y_{u,G}(\mathbf{x}_{u})f_{\mathbf{X}}(\mathbf{x})d\mathbf{x}\\
 & =\int_{\mathbb{R}^{|u|}}y_{u,G}(\mathbf{x}_{u})f_{u}(\mathbf{x}_{u})d\mathbf{x}_{u}\\
 & =\int_{\mathbb{R}^{|u|-1}}\int_{\mathbb{R}}y_{u,G}(\mathbf{x}_{u})f_{u}(\mathbf{x}_{u})dx_{i}{\displaystyle \prod_{j\in u,j\neq i}dx_{j}}\\
 & =0,
\end{align*}
where the last line follows from using (\ref{8}).
\end{proof}
\begin{proposition}
\label{prop:4}Two distinct generalized ADD component functions $y_{u,G}$
and $y_{v,G}$, where $\emptyset\ne u\subseteq\{1,\cdots,N\}$, $\emptyset\ne v\subseteq\{1,\cdots,N\}$,
and $v\subset u$, are orthogonal, i.e., they satisfy the property
\[
\mathbb{E}\left[y_{u,G}(\mathbf{X}_{u})y_{v,G}(\mathbf{X}_{v})\right]=0.
\]
\end{proposition}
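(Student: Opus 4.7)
The plan is to reduce the expectation to an iterated integral that contains an inner factor annihilated by the weak condition (\ref{8}). Since $v\subset u$ is a proper containment, the set difference $u\setminus v$ is nonempty, so I fix any index $i\in u\setminus v$. The essential structural observation is that $y_{v,G}(\mathbf{x}_v)$ does not depend on the coordinate $x_i$; this is the hook on which the whole argument hangs.

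Starting from
\[
\mathbb{E}\left[y_{u,G}(\mathbf{X}_u)\,y_{v,G}(\mathbf{X}_v)\right]=\int_{\mathbb{R}^N} y_{u,G}(\mathbf{x}_u)\,y_{v,G}(\mathbf{x}_v)\,f_{\mathbf{X}}(\mathbf{x})\,d\mathbf{x},
\]
I would first integrate out the complementary coordinates $\mathbf{x}_{-u}$, replacing the joint density by the marginal $f_u(\mathbf{x}_u)$ and collapsing the integration domain to $\mathbb{R}^{|u|}$. Then, by Fubini, I separate the $x_i$-integration from the remaining $\{x_j:j\in u,\,j\neq i\}$. Because $y_{v,G}(\mathbf{x}_v)$ is constant in $x_i$, it factors outside the innermost integral, leaving exactly $\int_{\mathbb{R}} y_{u,G}(\mathbf{x}_u)f_u(\mathbf{x}_u)\,dx_i$, which is zero by the weak annihilating condition (\ref{8}). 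The outer integral then vanishes.

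I do not expect any serious technical obstacle: the argument is essentially a one-line consequence of (\ref{8}) once the index $i\in u\setminus v$ has been singled out. The only points requiring a small amount of care are (i) justifying Fubini, which follows from square-integrability of the component functions under the generic measure, as stipulated above (\ref{7}), and (ii) emphasizing the conceptual significance of the hypothesis $v\subset u$. Strict containment is exactly what guarantees the existence of the pivot coordinate $i$; for two subsets $u,v$ with $u\neq v$ but neither contained in the other, no coordinate is available on which $y_{v,G}$ is simultaneously constant while $y_{u,G}$ is annihilated by (\ref{8}). This is the precise reason why the generalized ADD yields only the \emph{hierarchical} orthogonality of Proposition \ref{prop:4} and fails to reproduce the full pairwise orthogonality of Proposition \ref{prop:2}.
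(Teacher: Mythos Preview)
Your proof is correct and follows essentially the same approach as the paper: pick $i\in u\setminus v$, marginalize to $f_u$, factor $y_{v,G}$ out of the $x_i$-integration, and apply the weak annihilating condition (\ref{8}). The paper's version differs only cosmetically in that it splits the outer integral as $\int_{\mathbb{R}^{|v|}}\int_{\mathbb{R}^{|u\setminus v|-1}}$ rather than $\int_{\mathbb{R}^{|u|-1}}$, and your additional commentary on why strict inclusion is essential is a welcome clarification not present in the original.
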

\begin{proof}
For any two subsets $\emptyset\ne u\subseteq\{1,\cdots,N\}$ and $\emptyset\ne v\subseteq\{1,\cdots,N\}$,
where $v\subset u$, the subset $u=v\cup(u\setminus v)$. Let $i\in(u\setminus v)\subseteq u$.
Then
\begin{align*}
\mathbb{E}\left[y_{u,G}(\mathbf{X}_{u})y_{v,G}(\mathbf{X}_{v})\right] & :=\int_{\mathbb{R}^{N}}y_{u,G}(\mathbf{x}_{u})y_{v,G}(\mathbf{x}_{v})f_{\mathbf{X}}(\mathbf{x})d\mathbf{x}\\
 & =\int_{\mathbb{R}^{|u|}}y_{u,G}(\mathbf{x}_{u})y_{v,G}(\mathbf{x}_{v})f_{u}(\mathbf{x}_{u})d\mathbf{x}_{u}\\
 & =\int_{\mathbb{R}^{|v|}}y_{v,G}(\mathbf{x}_{v})\int_{\mathbb{R}^{|u\setminus v|}}y_{u,G}(\mathbf{x}_{u})f_{u}(\mathbf{x}_{u})d\mathbf{x}_{u\setminus v}d\mathbf{x}_{v}\\
 & =\int_{\mathbb{R}^{|v|}}y_{v,G}(\mathbf{x}_{v})\int_{\mathbb{R}^{|u\setminus v|-1}}\int_{\mathbb{R}}y_{u,G}(\mathbf{x}_{u})f_{u}(\mathbf{x}_{u})dx_{i}\prod_{{\textstyle {j\in(u\setminus v)\atop {j\neq i}}}}dx_{j}d\mathbf{x}_{v}\\
 & =0,
\end{align*}
where the equality to \emph{zero} results from using (\ref{8}).
\end{proof}

It is elementary to show that (\ref{8}) shrinks to (\ref{2}) for independent random variables, that is, when $\mathbf{X}_{u}$ follows a product-type probability measure $f_{u}(\mathbf{x}_{u})=\Pi_{i \in u}f_{\{i\}}(x_{i})$ for $\emptyset\ne u\subseteq\{1,\cdots,N\}$.

From Propositions \ref{prop:1} and \ref{prop:3}, all non-constant
component functions of ADD, whether classical or generalized, have
\emph{zero} means. Therefore, a non-product type probability measure,
relevant to the generalized ADD, does not vitiate the first-moment
properties of the classical ADD. However, Propositions \ref{prop:2}
and \ref{prop:4}, which describe the second-moment properties of
ADD, tell a slightly different tale: any two distinct non-constant
component functions of the classical ADD are orthogonal, whereas two
distinct non-constant component functions of the generalized ADD are
orthogonal only if the index set of one function is a proper subset
of the index set of the other function. As an example, consider $N=3$
with $2^{3}-1=7$ non-constant component functions. Then, the generalized
ADD permits orthogonality between (1) $y_{\{i\},G}$ and $y_{\{i_{1},i_{2}\},G}$,
where $i=1,2,3$, ($i_{1}=i,\: i_{2}=1,2,3,\: i_{1}<i_{2})$, ($i_{2}=i,\: i_{1}=1,2,3,\: i_{1}<i_{2})$;
(2) $y_{\{i\},G}$ and $y_{\{123\},G}$, where $i=1,2,3$; and (3)
$y_{\{i_{1},i_{2}\},G}$ and $y_{\{1,2,3\},G}$, where $i_{1},i_{2}=1,2,3,\: i_{1}<i_{2}$.
This nested orthogonality, originally presented and referred to as the hierarchical orthogonality
by Hooker \cite{hooker07}, is the result of imposing weakened annihilating conditions on the generalized ADD.

\subsection{A Coupled System of Equations}

Hooker \cite{hooker07} proposed finding component functions of the
generalized ADD by minimizing a mean-squared error subject to the
hierarchical orthogonality described by Propositions \ref{prop:3}
and \ref{prop:4}. A simpler alternative proposed here entails integrating
(\ref{7}) with respect to a judiciously chosen marginal probability
measure and implementing the weak annihilating conditions when required.
Lemma \ref{lem:5} and Theorem \ref{thm:6} illuminate this alternative
approach, which sidesteps the need to solve the optimization problem
altogether. The end result is a coupled system of equations satisfied
by component functions.
\begin{lemma}
\label{lem:5}The generalized ADD component functions $y_{v,G}$,
$\emptyset \neq v\subseteq\{1,\cdots,N\}$, of a square-integrable function $y:\mathbb{R}^{N}\to\mathbb{R}$,
when integrated with respect to the probability measure $f_{-u}(\mathbf{x}_{-u})d\mathbf{x}_{-u}$,
$u\subseteq\{1,\cdots,N\}$, reduce to
\begin{equation}
\begin{array}{l}
{\displaystyle \int_{\mathbb{R}^{N-|u|}}y_{v,G}(\mathbf{x}_{v})f_{-u}(\mathbf{x}_{-u})d\mathbf{x}_{-u}}=\\
\qquad\begin{cases}
\int_{\mathbb{R}^{|v\cap-u|}}y_{v,G}(\mathbf{x}_{v})f_{v\cap-u}(\mathbf{x}_{v\cap-u})d\mathbf{x}_{v\cap-u}\; & \text{if}~~v\cap u\neq\emptyset\:\mathrm{and}\: v\nsubseteq u,\\
y_{v,G}(\mathbf{x}_{v}) & \text{if}~~v\cap u\neq\emptyset\:\mathrm{and}\: v\subseteq u,\\
0 & \text{if}~~v\cap u=\emptyset.
\end{cases}
\end{array}\label{9}
\end{equation}
\end{lemma}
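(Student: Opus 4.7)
The plan is a case analysis keyed to how the index set $v$ intersects $u$ and $-u$. Observe that $\{1,\ldots,N\}$ admits the four-way partition into $v\cap u$, $v\cap -u$, $u\setminus v$, and $(-u)\setminus v$, so that $\mathbf{x}_v=(\mathbf{x}_{v\cap u},\mathbf{x}_{v\cap -u})$ while $\mathbf{x}_{-u}=(\mathbf{x}_{v\cap -u},\mathbf{x}_{(-u)\setminus v})$. The integrand $y_{v,G}(\mathbf{x}_v)$ does not depend on $\mathbf{x}_{(-u)\setminus v}$, so by Fubini's theorem I would integrate out those coordinates first, invoking the elementary marginal-of-marginal identity
\[
\int f_{-u}(\mathbf{x}_{-u})\,d\mathbf{x}_{(-u)\setminus v}=f_{v\cap -u}(\mathbf{x}_{v\cap -u}),
\]
which follows from applying the definition of a marginal density twice. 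This single reduction converts the original $(N-|u|)$-fold integral into a $|v\cap -u|$-fold integral of $y_{v,G}$ against the marginal $f_{v\cap -u}$.

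Once this reduction is made, the three cases are immediate. If $v\subseteq u$, then $v\cap -u=\emptyset$, no remaining integration is needed, and we are left with $y_{v,G}(\mathbf{x}_v)$ itself, establishing the middle case. If $v\cap u\ne\emptyset$ and $v\nsubseteq u$, then both $v\cap u$ and $v\cap -u$ are non-empty; the reduction produces exactly the stated expression, with $\mathbf{x}_{v\cap u}$ surviving as free variables because they lie outside the integration domain. If $v\cap u=\emptyset$, then $v\subseteq -u$ and $v\cap -u=v$, so the reduction collapses to $\int_{\mathbb{R}^{|v|}} y_{v,G}(\mathbf{x}_v)\,f_v(\mathbf{x}_v)\,d\mathbf{x}_v=\mathbb{E}[y_{v,G}(\mathbf{X}_v)]$.

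The argument is essentially bookkeeping, and no step should present a real obstacle. The only place where content beyond set manipulation and the definition of marginals is used is the third case, where Proposition~\ref{prop:3}—that is, the weak annihilating conditions—is invoked to conclude that the remaining expectation vanishes. Care in the first two cases is limited to keeping track of which coordinates are integrated and which survive as arguments, so the proof should amount to a clean Fubini-plus-partition argument followed by a one-line appeal to Proposition~\ref{prop:3}.
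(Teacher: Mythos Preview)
Your proposal is correct and follows essentially the same approach as the paper: both perform a Fubini reduction that integrates out the coordinates in $(-u)\setminus v$ to replace $f_{-u}$ by the marginal $f_{v\cap -u}$, and then specialize to the three cases. The only cosmetic difference is that in the case $v\cap u=\emptyset$ you cite Proposition~\ref{prop:3} to conclude $\mathbb{E}[y_{v,G}(\mathbf{X}_v)]=0$, whereas the paper reproves this inline by appealing directly to the weak annihilating condition~(\ref{8}).
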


\begin{proof}
For any two subsets $\emptyset \neq v\subseteq\{1,\cdots,N\}$, $u\subseteq\{1,\cdots,N\}$,
one can write $(v\cap-u)\subseteq-u$ and $-u=(-u\setminus(v\cap-u))\cup(v\cap-u)$.
Let $v\cap u\neq\emptyset$, where $v\nsubseteq u$ in general. Then
one of the two non-trivial results of (\ref{9}) is obtained as
\begin{equation}
\begin{split}{\displaystyle \int_{\mathbb{R}^{N-|u|}}\!\! y_{v,G}(\mathbf{x}_{v})f_{-u}(\mathbf{x}_{-u})d\mathbf{x}_{-u}} & \!=\!{\displaystyle \int_{\mathbb{R}^{|v\cap-u|}}\!\! y_{v,G}(\mathbf{x}_{v})\int_{\mathbb{R}^{N-|u|-|v\cap-u|}}\!\!\!\! f_{-u}(\mathbf{x}_{-u\setminus(v\cap-u)},\mathbf{x}_{v\cap-u})}\\
 & \;\;\;\times d\mathbf{x}_{-u\setminus(v\cap-u)}d\mathbf{x}_{v\cap-u}\\
 & \!=\!{\displaystyle \int_{\mathbb{R}^{|v\cap-u|}}y_{v,G}(\mathbf{x}_{v})f_{v\cap-u}(\mathbf{x}_{v\cap-u})d\mathbf{x}_{v\cap-u}}.
\end{split}
\label{10}
\end{equation}
If $v\subseteq u$, then $y_{v}(\mathbf{x}_{v})$ does not depend
on $\mathbf{x}_{-u}$, resulting in
\[
{\displaystyle \int_{\mathbb{R}^{N-|u|}}y_{v,G}(\mathbf{x}_{v})f_{-u}(\mathbf{x}_{-u})d\mathbf{x}_{-u}}=y_{v,G}(\mathbf{x}_{v}){\displaystyle \int_{\mathbb{R}^{N-|u|}}f_{-u}(\mathbf{x}_{-u})d\mathbf{x}_{-u}}=y_{v,G}(\mathbf{x}_{v}),
\]
the other non-trivial result of (\ref{9}). Finally, if $v\cap u=\emptyset$,
then $v\cap-u=v$. Let $i\in v$. Therefore, the last line of (\ref{10}), also valid for $v \cap u=\emptyset$, becomes
\[
\begin{split}{\displaystyle \int_{\mathbb{R}^{N-|u|}}y_{v,G}(\mathbf{x}_{v})f_{-u}(\mathbf{x}_{-u})d\mathbf{x}_{-u}} & ={\displaystyle \int_{\mathbb{R}^{|v|}}y_{v,G}(\mathbf{x}_{v})f_{v}(\mathbf{x}_{v})d\mathbf{x}_{v}}\\
 & ={\displaystyle \int_{\mathbb{R}^{|v|-1}}\left(\int_{\mathbb{R}}y_{v,G}(\mathbf{x}_{v})f_{v}(\mathbf{x}_{v})dx_{i}\right)}{\displaystyle \prod_{j\in v,j\neq i}dx_{j}}\\
 & =0,
\end{split}
\]
where the equality to \emph{zero} follows from using (\ref{8}).\end{proof}
\begin{theorem}
\label{thm:6}The generalized ADD component functions $y_{u,G}$,
$u\subseteq\{1,\cdots,N\}$, of a square-integrable function $y:\mathbb{R}^{N}\to\mathbb{R}$
for a given probability measure $f_{\mathbf{X}}(\mathbf{x})d\mathbf{x}$
of $\mathbf{X}\in\mathbb{R}^{N}$ satisfy
\begin{subequations}
\begin{align}
y_{\emptyset,G} & =\int_{\mathbb{R}^{N}}y(\mathbf{x})f_{\mathbf{X}}(\mathbf{x})d\mathbf{x},\label{11a}\\
y_{u,G}(\mathbf{X}_{u}) & =\int_{\mathbb{R}^{N-|u|}}y(\mathbf{X}_{u},\mathbf{x}_{-u})f_{-u}(\mathbf{x}_{-u})d\mathbf{x}_{-u}-{\displaystyle \sum_{v\subset u}y_{v,G}(\mathbf{X}_{v})}-\nonumber \\
 & \;\;\;{\displaystyle \sum_{{\textstyle {\emptyset\ne v\subseteq\{1,\cdots,N\}\atop v\cap u\ne\emptyset,v\nsubseteq u}}}{\displaystyle \int_{\mathbb{R}^{|v\cap-u|}}y_{v,G}(\mathbf{X}_{v})f_{v\cap-u}(\mathbf{x}_{v\cap-u})d\mathbf{x}_{v\cap-u}}}.\label{11b}
\end{align}
\end{subequations}
\end{theorem}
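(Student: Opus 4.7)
The plan is to derive both formulas directly from the generalized ADD identity~(\ref{7}) by integrating it against a cleverly chosen marginal measure, rather than attacking Hooker's constrained optimization problem. The key observation is that Lemma~\ref{lem:5} has already classified, for every subset $v$, what happens when $y_{v,G}$ is integrated against $f_{-u}(\mathbf{x}_{-u})d\mathbf{x}_{-u}$, so after integration the right-hand side of~(\ref{7}) splits cleanly into three disjoint families of terms indexed by how $v$ relates to $u$.

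Concretely, I would start by integrating both sides of~(\ref{7}) with respect to $f_{-u}(\mathbf{x}_{-u})d\mathbf{x}_{-u}$. The left-hand side becomes $\int_{\mathbb{R}^{N-|u|}}y(\mathbf{X}_{u},\mathbf{x}_{-u})f_{-u}(\mathbf{x}_{-u})d\mathbf{x}_{-u}$, which is exactly the conditional-type expectation appearing in~(\ref{11a}) and~(\ref{11b}). On the right-hand side, the $v=\emptyset$ term is simply the constant $y_{\emptyset,G}$, since $f_{-u}$ integrates to one, and every other term is handled by Lemma~\ref{lem:5}: if $v\cap u=\emptyset$ the term vanishes by the weak annihilating condition; if $\emptyset\neq v\subseteq u$ the term reproduces $y_{v,G}(\mathbf{X}_{v})$ itself; and in the remaining mixed case $v\cap u\neq\emptyset$, $v\nsubseteq u$, it collapses to the partial integral $\int_{\mathbb{R}^{|v\cap -u|}}y_{v,G}(\mathbf{X}_{v})f_{v\cap -u}(\mathbf{x}_{v\cap -u})d\mathbf{x}_{v\cap -u}$ appearing in the last sum of~(\ref{11b}).

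Specializing to $u=\emptyset$ immediately yields~(\ref{11a}): every nonempty $v$ satisfies $v\cap u=\emptyset$ and is killed by Lemma~\ref{lem:5}, leaving only $y_{\emptyset,G}$ on the right and $\int y(\mathbf{x})f_{\mathbf{X}}(\mathbf{x})d\mathbf{x}$ on the left. For nonempty $u$, I would collect the surviving terms, recognize that $\{\emptyset\}\cup\{\emptyset\neq v\subseteq u\}=\{v\subseteq u\}$ so that $y_{\emptyset,G}$ merges with the $\emptyset\neq v\subset u$ and $v=u$ contributions into a single sum $\sum_{v\subseteq u}y_{v,G}(\mathbf{X}_{v})$, and then isolate the $v=u$ term (which is the unknown $y_{u,G}(\mathbf{X}_{u})$) on one side. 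Rearranging produces exactly~(\ref{11b}).

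I do not anticipate a serious obstacle: Lemma~\ref{lem:5} does all the analytic work, and what remains is essentially bookkeeping of subsets. The only point requiring care is the case enumeration used when matching the surviving sums to the expression in~(\ref{11b})—specifically, making sure that the index set $\{\emptyset\neq v\subseteq\{1,\ldots,N\}:\ v\cap u\neq\emptyset,\ v\nsubseteq u\}$ is exactly the complement, among nonempty $v$, of the subsets already accounted for by $\sum_{v\subset u}y_{v,G}$ and by the self-term $y_{u,G}$, together with the vanishing family $v\cap u=\emptyset$. Once that accounting is verified, the theorem follows directly.
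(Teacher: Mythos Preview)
Your proposal is correct and follows essentially the same route as the paper: integrate the decomposition~(\ref{7}) against $f_{-u}(\mathbf{x}_{-u})d\mathbf{x}_{-u}$, then invoke Lemma~\ref{lem:5} to classify the resulting terms and rearrange. The only cosmetic difference is that for the case $u=\emptyset$ the paper appeals to Proposition~\ref{prop:3} rather than to the third case of Lemma~\ref{lem:5}, but these are the same observation.
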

\begin{proof}
Changing the dummy index from $u$ to $v$, replacing $\mathbf{X}$
with $\mathbf{x}$, and integrating both sides of (\ref{7}) with
respect to the measure $f_{-u}(\mathbf{x}_{-u})d\mathbf{x}_{-u}$,
that is, over all variables except $\mathbf{x}_{u}$, yields
\begin{equation}
\int_{\mathbb{R}^{N-|u|}}y(\mathbf{x})f_{-u}(\mathbf{x}_{-u})d\mathbf{x}_{-u}={\displaystyle \sum_{v\subseteq\{1,\cdots,N\}}\int_{\mathbb{R}^{N-|u|}}y_{v,G}(\mathbf{x}_{v})f_{-u}(\mathbf{x}_{-u})d\mathbf{x}_{-u}},\label{12}
\end{equation}
which is valid for any $u\subseteq\{1,\cdots,N\}$, including the
empty set $\emptyset$. To obtain the constant component function
of $y$, let $u=\emptyset$. Then $-u=\{1,\cdots,N\}$ and $f_{-u}(\mathbf{x}_{-u})d\mathbf{x}_{-u}=f_{\mathbf{X}}(\mathbf{x})d\mathbf{x}$,
resulting in
\begin{equation}
\begin{array}{rcl}
\int_{\mathbb{R}^{N}}y(\mathbf{x})f_{\mathbf{X}}(\mathbf{x})d\mathbf{x} & = & y_{\emptyset,G}+{\displaystyle \sum_{\emptyset\neq v\subseteq\{1,\cdots,N\}}\int_{\mathbb{R}^{N}}y_{v,G}(\mathbf{x}_{v})f_{\mathbf{X}}(\mathbf{x})d\mathbf{x}}\\
 & = & y_{\emptyset,G}+{\displaystyle \sum_{\emptyset\neq v\subseteq\{1,\cdots,N\}}\mathbb{E}\left[y_{v,G}(\mathbf{X}_{v})\right]}.
\end{array}\label{13}
\end{equation}
Invoking Proposition \ref{prop:3}, each expectation of the sum in
(\ref{13}) vanishes, yielding (\ref{11a}). To derive the non-constant
component functions, apply Lemma \ref{lem:5}, that is, (\ref{9})
to simplify the right side of (\ref{12}) into
\begin{equation}
\begin{array}{l}
{\displaystyle \sum_{v\subseteq\{1,\cdots,N\}}\int_{\mathbb{R}^{N-|u|}}y_{v,G}(\mathbf{x}_{v})f_{-u}(\mathbf{x}_{-u})d\mathbf{x}_{-u}}=\\
y_{u,G}(\mathbf{x}_{u})+{\displaystyle \sum_{v\subset u}}y_{v,G}(\mathbf{x}_{v})+{\displaystyle \sum_{{\textstyle {\emptyset\ne v\subseteq\{1,\cdots,N\}\atop v\cap u\ne\emptyset,v\nsubseteq u}}}{\displaystyle \int_{\mathbb{R}^{|v\cap-u|}}y_{v,G}(\mathbf{x}_{v})f_{v\cap-u}(\mathbf{x}_{v\cap-u})d\mathbf{x}_{v\cap-u}}}
\end{array}\label{14}
\end{equation}
with $\subset$ representing the proper subset (strict inclusion). Substituting
(\ref{14}) into (\ref{12}) produces (\ref{11b}), completing the
proof.
\end{proof}

The constant component function of ADD, whether classical ($y_{\emptyset,C}$)
or generalized ($y_{\emptyset,G}$), is the same as the expected value
of $y(\mathbf{X})$. According to (\ref{3b}), all non-constant component
functions of the classical ADD are hierarchically ordered in terms
of the cardinality of subsets of $\{1,\cdots,N\}$ and are determined
sequentially. This is possible because for a given $\emptyset\neq u\subseteq\{1,\cdots,N\}$,
the component function $y_{u,C}$ depends only on the component functions
$y_{v,C}$ such that $v\subset u$, including $\emptyset$. In contrast,
the component functions of the generalized ADD, satisfying (\ref{11b}),
are coupled and must be solved simultaneously. In the latter case,
for a given $\emptyset\neq u\subseteq\{1,\cdots,N\}$, the component
function $y_{u,G}$ depends not only on the component functions $y_{v,G}$,
where $v\subset u$, but also on the component functions $y_{v,G}$,
where $v\cap u\ne\emptyset$, $v\nsubseteq u$. As an example, consider
$u=\{1\}$ and $N=3$. The classical and generalized component functions
depending on $x_{1}$ are
\[
y_{\{1\},C}=\int_{\mathbb{R}^{2}}y(x_{1},x_{2},x_{3}){\displaystyle f_{\{2\}}(x_{2})f_{\{3\}}(x_{3})dx_{2}dx_{3}}-y_{\emptyset,C}
\]
and
\[
\begin{array}{rcl}
y_{\{1\},G} & = & \int_{\mathbb{R}^{2}}y(x_{1},x_{2},x_{3}){\displaystyle f_{\{2,3\}}(x_{2},x_{3})dx_{2}dx_{3}}-y_{\emptyset,G}-\\
 &  & \int_{\mathbb{R}}y_{\{1,2\},G}(x_{1},x_{2}){\displaystyle f_{\{2\}}(x_{2})dx_{2}-\int_{\mathbb{R}}y_{\{1,3\},G}(x_{1},x_{3}){\displaystyle f_{\{3\}}(x_{3})dx_{3}-}}\\
 &  & \int_{\mathbb{R}^{2}}y_{\{1,2,3\},G}(x_{1},x_{2},x_{3}){\displaystyle f_{\{2,3\}}(x_{2},x_{3})dx_{2}dx_{3}},
\end{array}
\]
respectively. For the generalized ADD, there exist $2^{N}-1$ such
coupled equations, the right number of equations to determine uniquely
all non-constant component functions. A new computational method solving this system of equations will be formally presented in the following section.

\begin{corollary}
\label{corr:r1}The univariate, bivariate, and trivariate component functions of a square-integrable function $y:\mathbb{R}^{N} \to \mathbb{R}$, obtained by setting (1) $u=\{i\}$; $i=1,\cdots,N$; $1 \le N < \infty$, (2) $u=\{i_1,i_2\}$; $i_1=1,\cdots,N-1$; $i_2=i_1+1; \cdots,N$, $2 \le N < \infty$, and (3) $u=\{i_1,i_2,i_3\}$; $i_1=1,\cdots,N-2$; $i_2=i_1+1,\cdots,N-1$; $i_3=i_2+1,\cdots,N$; $3 \le N < \infty$, respectively, in (\ref{11b}) are
\begin{equation}
\begin{split}
y_{\{i\},G}(X_{i}) & = \int_{\mathbb{R}^{N-1}}y(X_{i},\mathbf{x}_{-\{i\}})f_{-\{i\}}(\mathbf{x}_{-\{i\}})d\mathbf{x}_{-\{i\}}-y_{\emptyset,G}-\\
& \;\;\;{\displaystyle \sum_{{\textstyle {\emptyset\ne v\subseteq\{1,\cdots,N\}\atop v\cap \{i\}\ne\emptyset,v\nsubseteq \{i\}}}}{\displaystyle \int_{\mathbb{R}^{|v\cap-\{i\}|}}y_{v,G}(\mathbf{x}_{v})f_{v\cap-\{i\}}(\mathbf{x}_{v\cap-\{i\}})d\mathbf{x}_{v\cap-\{i\}}}}, \\
\end{split}
\nonumber
\end{equation}
\begin{equation}
\begin{array}{l}
  y_{\{i_1,i_2\},G}(X_{i_1},X_{i_2}) =  \\
  \int_{\mathbb{R}^{N-2}}y(X_{i_1},X_{i_2},\mathbf{x}_{-\{i_1,i_2\}})f_{-\{i_1,i_2\}}(\mathbf{x}_{-\{i_1,i_2\}})d\mathbf{x}_{-\{i_1,i_2\}}-  \\
  y_{\emptyset,G}-y_{\{i_1\},G}(X_{i_1})-y_{\{i_2\},G}(X_{i_2})-  \\
  {\displaystyle \sum_{{\textstyle {\emptyset\ne v\subseteq\{1,\cdots,N\}\atop v\cap \{i_1,i_2\}\ne\emptyset,v\nsubseteq \{i_1,i_2\}}}}{\displaystyle \int_{\mathbb{R}^{|v\cap-\{i_1,i_2\}|}}y_{v,G}(\mathbf{x}_{v})f_{v\cap-\{i_1,i_2\}}(\mathbf{x}_{v\cap-\{i_1,i_2\}})d\mathbf{x}_{v\cap-\{i_1,i_2\}}}},
\end{array}
\nonumber
\end{equation}
\begin{equation}
\begin{array}{l}
  y_{\{i_1,i_2,i_3\},G}(X_{i_1},X_{i_2},X_{i_3}) =  \\
  \int_{\mathbb{R}^{N-3}}y(X_{i_1},X_{i_2},X_{i_3},\mathbf{x}_{-\{i_1,i_2,i_3\}})f_{-\{i_1,i_2,i_3\}}(\mathbf{x}_{-\{i_1,i_2,i_3\}})d\mathbf{x}_{-\{i_1,i_2,i_3\}}-  \\
  y_{\emptyset,G}-y_{\{i_1\},G}(X_{i_1})-y_{\{i_2\},G}(X_{i_2})-y_{\{i_3\},G}(X_{i_3})-  \\
  y_{\{i_1,i_2\},G}(X_{i_1},X_{i_2})-y_{\{i_1,i_3\},G}(X_{i_1},X_{i_3})-y_{\{i_2,i_3\},G}(X_{i_2},X_{i_3})-  \\
  {\displaystyle \sum_{{\textstyle {\emptyset\ne v\subseteq\{1,\cdots,N\}\atop v\cap \{i_1,i_2,i_3\}\ne\emptyset,v\nsubseteq \{i_1,i_2,i_3\}}}}\!\!\!\!\!\!\!\!\!\!\!\!{\displaystyle \int_{\mathbb{R}^{|v\cap-\{i_1,i_2,i_3\}|}}y_{v,G}(\mathbf{x}_{v})f_{v\cap-\{i_1,i_2,i_3\}}(\mathbf{x}_{v\cap-\{i_1,i_2,i_3\}})d\mathbf{x}_{v\cap-\{i_1,i_2,i_3\}}}}.
\end{array}
\nonumber
\end{equation}
\end{corollary}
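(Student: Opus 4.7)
The plan is to obtain Corollary \ref{corr:r1} by direct specialization of Theorem \ref{thm:6}: each of the three formulas is just equation (\ref{11b}) written out for $|u|=1$, $|u|=2$, and $|u|=3$, with the first sum $\sum_{v\subset u} y_{v,G}(\mathbf{X}_v)$ enumerated explicitly while the second sum (over $v$ with $v\cap u\neq\emptyset$ and $v\nsubseteq u$) is carried unchanged. So the proof is a bookkeeping exercise in enumerating proper subsets of $u$, and no new analytic content is needed beyond Theorem \ref{thm:6}.

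First I would fix $u=\{i\}$ for $1\le i\le N$. The only proper subset of $\{i\}$ is $\emptyset$, so $\sum_{v\subset u} y_{v,G}(\mathbf{X}_v)=y_{\emptyset,G}$, and substituting into (\ref{11b}) with $-u=-\{i\}$ yields exactly the first displayed identity. Next I would set $u=\{i_1,i_2\}$ with $i_1<i_2$. The proper subsets are $\emptyset$, $\{i_1\}$, $\{i_2\}$, so $\sum_{v\subset u} y_{v,G}(\mathbf{X}_v)=y_{\emptyset,G}+y_{\{i_1\},G}(X_{i_1})+y_{\{i_2\},G}(X_{i_2})$; plugging into (\ref{11b}) reproduces the second identity, with the second sum restricted to $v$ satisfying $v\cap\{i_1,i_2\}\ne\emptyset$ and $v\nsubseteq\{i_1,i_2\}$.

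Finally, for $u=\{i_1,i_2,i_3\}$ with $i_1<i_2<i_3$, the seven proper subsets are $\emptyset$, the three singletons $\{i_1\},\{i_2\},\{i_3\}$, and the three pairs $\{i_1,i_2\},\{i_1,i_3\},\{i_2,i_3\}$. Writing out $\sum_{v\subset u} y_{v,G}(\mathbf{X}_v)$ as the sum of the corresponding component functions and inserting the result into (\ref{11b}) delivers the third identity; the residual sum over $v$ with $v\cap\{i_1,i_2,i_3\}\ne\emptyset$ and $v\nsubseteq\{i_1,i_2,i_3\}$ is kept in its general form exactly as in Theorem \ref{thm:6}.

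There is no real obstacle in this argument; the only care needed is in the enumeration and in keeping the index of the residual sum aligned with the cardinality of $u$ (so that $v\cap -u$, and hence the inner integration, is written over $v\cap -\{i\}$, $v\cap -\{i_1,i_2\}$, and $v\cap -\{i_1,i_2,i_3\}$, respectively). Since Theorem \ref{thm:6} has already been established, the corollary requires no further analysis beyond this specialization, so I would present it as a brief derivation rather than a new proof.
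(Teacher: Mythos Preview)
Your proposal is correct and matches the paper's own treatment: the corollary is stated as an immediate specialization of Theorem~\ref{thm:6}, obtained by substituting $u=\{i\}$, $u=\{i_1,i_2\}$, and $u=\{i_1,i_2,i_3\}$ into (\ref{11b}) and enumerating the proper subsets in the first sum, and the paper provides no separate proof beyond this.
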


The specialized formulae for the component functions in Corollary \ref{corr:r1} were previously derived by Li and Rabitz \cite{li12}.  Theorem \ref{thm:6}, in contrast, is general, and provides a single master formula to concisely represent all component functions of the generalized ADD.

\begin{corollary}
\label{corr:7}If $\mathbf{X}=(X_{1},\cdots,X_{N})\in\mathbb{R}^{N}$
comprises independent random variables, which follow arbitrary probability
measures $f_{\{i\}}(x_{i})dx_{i}$, $i=1,\cdots,N$, then the generalized
ADD degenerates to the classical ADD.
\end{corollary}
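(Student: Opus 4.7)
The plan is to show that under independence, the generalized master formula in Theorem \ref{thm:6} collapses term-by-term to the classical formula in (\ref{3a})--(\ref{3b}). The starting point is the factorization $f_{\mathbf{X}}(\mathbf{x})=\prod_{i=1}^{N}f_{\{i\}}(x_{i})$, from which $f_{-u}(\mathbf{x}_{-u})=\prod_{i\notin u}f_{\{i\}}(x_{i})$ and $f_{v\cap-u}(\mathbf{x}_{v\cap-u})=\prod_{i\in v\cap-u}f_{\{i\}}(x_{i})$ follow by direct marginalization. With these factorizations, the constant component $y_{\emptyset,G}$ in (\ref{11a}) immediately matches $y_{\emptyset,C}$ in (\ref{3a}), and the first integral on the right-hand side of (\ref{11b}) matches the first integral in (\ref{3b}).

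The key step is to show that every term in the last sum of (\ref{11b}) vanishes. First I would observe that, for independent measures, the weak annihilating condition (\ref{8}) sharpens to the strong annihilating condition (\ref{2}): for any $i\in v$, one factors $f_{v}(\mathbf{x}_{v})=f_{\{i\}}(x_{i})\prod_{j\in v,j\neq i}f_{\{j\}}(x_{j})$ inside (\ref{8}), and canceling the positive factor $\prod_{j\in v,j\neq i}f_{\{j\}}(x_{j})$ (valid a.e.\ on $\mathrm{supp}(f_{v})$) yields $\int_{\mathbb{R}}y_{v,G}(\mathbf{x}_{v})f_{\{i\}}(x_{i})dx_{i}=0$. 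Then, for each $v$ with $v\cap u\ne\emptyset$ and $v\nsubseteq u$, the set $v\cap-u$ is nonempty; picking any $j\in v\cap-u\subseteq v$, Fubini gives
\[
\int_{\mathbb{R}^{|v\cap-u|}}y_{v,G}(\mathbf{x}_{v})\prod_{i\in v\cap-u}f_{\{i\}}(x_{i})d\mathbf{x}_{v\cap-u}=\int_{\mathbb{R}^{|v\cap-u|-1}}\!\!\left(\int_{\mathbb{R}}y_{v,G}(\mathbf{x}_{v})f_{\{j\}}(x_{j})dx_{j}\right)\prod_{i\in v\cap-u,i\neq j}f_{\{i\}}(x_{i})dx_{i},
\]
and the inner integral vanishes by the just-established strong annihilating property. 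Hence the entire last summation in (\ref{11b}) disappears, and (\ref{11b}) reduces exactly to (\ref{3b}).

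Having matched the defining equations (\ref{11a})--(\ref{11b}) with (\ref{3a})--(\ref{3b}) under independence, the conclusion follows from the uniqueness of the generalized ADD (as proved by Hooker \cite{hooker07} and Chastaing \emph{et al.} \cite{chastaing12}): the classical component functions $y_{u,C}$ satisfy the weak annihilating conditions (since, under independence, these coincide with the strong ones they already satisfy) and solve the same coupled system, so $y_{u,G}=y_{u,C}$ for every $u\subseteq\{1,\cdots,N\}$. The only point requiring any care is the cancellation argument passing from (\ref{8}) to (\ref{2}), where one must ensure that the auxiliary factor $\prod_{j\in v,j\ne i}f_{\{j\}}(x_{j})$ can be divided out; this is justified on $\mathrm{supp}(f_{v})$, which suffices because the integrand in (\ref{8}) is already supported there. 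No other obstacle arises, and the argument is therefore essentially a bookkeeping verification that the extra coupling terms in the generalized formulation decouple and vanish in the product-measure case.
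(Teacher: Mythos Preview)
Your proof is correct and follows essentially the same route as the paper, which merely sketches that under a product measure the weak and strong annihilating conditions coincide and asserts that ``it is elementary to show that $y_{u,G}=y_{u,C}$.'' You have filled in precisely those elementary details---factoring the marginals, showing the extra coupling sum in (\ref{11b}) vanishes via the recovered strong annihilating property, and then invoking uniqueness---so your argument is a fleshed-out version of the paper's own one-paragraph proof rather than a different approach.
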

\begin{proof}
For independent coordinates of $\mathbf{X}$, all joint probability
density functions are products of their marginals, that is, $f_{u}(\mathbf{x}_{u})=\Pi_{i\in u}f_{\{i\}}(x_{i})$,
where $\emptyset\neq u\subseteq\{1,\cdots,N\}$. Using this product
structure of probability measures, which makes the strong and weak
annihilating conditions coincide, it is elementary to show that $y_{u,G}=y_{u,C}$
for any $u\subseteq\{1,\cdots,N\}$, including $y_{\emptyset,G}=y_{\emptyset,C}$.
Therefore, the generalized ADD reduces to the classical ADD.
\end{proof}

\section{A Constructive Method for Determining Component Functions}

This section presents a new computational method, employing measure-consistent,
multivariate orthonormal polynomials as basis functions, for solving
the coupled system of equations satisfied by the component functions
of the generalized ADD.

\subsection{Multivariate Orthonormal Polynomials}

For the rest of the paper, the standard multi-index notation will
be used in describing orthogonal polynomials in several variables.
Accordingly, for a given $\emptyset\neq u\subseteq\{1,\cdots,N\}$,
$1\le|u|\le N$, let $\mathbf{j}_{|u|}=(j_{1},\cdots,j_{|u|})\in\mathbb{N}_{0}^{|u|}$
represent a $|u|$-dimensional multi-index with each component a non-negative
integer. For $\mathbf{j}_{|u|}\in\mathbb{N}_{0}^{|u|}$ and $\mathbf{x}_{u}=(x_{i_{1}},\cdots,x_{i_{|u|}})\in\mathbb{R}^{|u|}$,
where $1\leq i_{1}<\cdots<i_{|u|}\leq N$, a monomial in $\mathbf{x}_{u}$
of index $\mathbf{j}_{|u|}$ is defined by $\mathbf{x}_{u}^{\mathbf{j}_{|u|}}:=x_{i_{1}}^{j_{1}} \times \cdots \times x_{i_{|u|}}^{j_{|u|}}$.
The non-negative integer $|\mathbf{j}_{|u|}|:=j_{1}+\cdots+j_{|u|}$,
which is equal to the 1-norm of $\mathbf{j}_{|u|}$, is called the
total degree of $\mathbf{x}_{u}^{\mathbf{j}_{|u|}}$. A linear combination
of $\mathbf{x}_{u}^{\mathbf{j}_{|u|}}$, where $|\mathbf{j}_{|u|}|=m_{u}$
and $m_{u}\in\mathbb{N}$, is a homogeneous polynomial of degree $m_{u}$.
Denote by $\mathcal{P}_{m_{u}}^{u}:=\mathrm{span}\{\mathbf{x}_{u}^{\mathbf{j}_{|u|}}:|\mathbf{j}_{|u|}|=m_{u},\,\mathbf{j}_{|u|}\in\mathbb{N}_{0}^{|u|}\}$
the space of homogeneous polynomials of degree $m_{u}$, by $\Pi_{m_{u}}^{u}:=\mathrm{span}\{\mathbf{x}_{u}^{\mathbf{j}_{|u|}}:|\mathbf{j}_{|u|}|\le m_{u},\,\mathbf{j}_{|u|}\in\mathbb{N}_{0}^{|u|}\}$
the space of polynomials of degree at most $m_{u}$, and by $\Pi^{u}$
the space of all polynomials of $|u|$ variables. It is well known
that \cite{dunkl01}
\[
\dim\mathcal{P}_{m_{u}}^{u}=\binom{m_{u}+|u|-1}{m_{u}}\;\mathrm{and}\;\dim\Pi_{m_{u}}^{u}=\binom{m_{u}+|u|}{m_{u}}.
\]
Assume that, for $\mathbf{j}_{|u|}\in\mathbb{N}_{0}^{|u|}$, the moments
$\int_{\mathbb{R}^{|u|}}\mathbf{x}_{u}^{\mathbf{j}_{|u|}}f_{u}(\mathbf{x}_{u})d\mathbf{x}_{u}$
of $\mathbf{X}_{u}$ exist and are finite, and $\int_{\mathbb{R}^{|u|}}\psi_{u\mathbf{j}_{|u|}}^{2}(\mathbf{x}_{u})f_{u}(\mathbf{x}_{u})d\mathbf{x}_{u}>0$
for every $\psi_{u\mathbf{j}_{|u|}}\in\Pi^{u}$, where $\psi_{u\mathbf{j}_{|u|}}\neq0$
is a polynomial in $\mathbf{x}_{u}$ of degree $\mathbf{j}_{|u|}$.
Consistent with the probability measure $f_{u}(\mathbf{x}_{u})d\mathbf{x}_{u}$,
define an inner product
\begin{equation}
(g,h)_{f_{u}}:=\int_{\mathbb{R}^{|u|}}g(\mathbf{x}_{u})h(\mathbf{x}_{u})f_{u}(\mathbf{x}_{u})d\mathbf{x}_{u}=:\mathbb{E}\left[g(\mathbf{X}_{u})h(\mathbf{X}_{u})\right]\label{15}
\end{equation}
of two $|u|$-variate functions $g$ and $h$. Then there exist orthogonal
polynomials in $\mathbf{x}_{u}$ with respect to the inner product
defined by (\ref{15}). More precisely, a polynomial $\psi_{u\mathbf{j}_{|u|}}\in\Pi_{m_{u}}^{u}$
is called orthogonal with respect to $(\cdot,\cdot)_{f_{u}}$ if $(\psi_{u\mathbf{j}_{|u|}},\psi_{u\mathbf{k}_{|u|}})_{f_{u}}=\mathbb{E}\left[\psi_{u\mathbf{j}_{|u|}}(\mathbf{X}_{u})\psi_{u\mathbf{k}_{|u|}}(\mathbf{X}_{u})\right]=0$
for $|\mathbf{k}_{|u|}|<|\mathbf{j}_{|u|}|$, that is for all $\psi_{u\mathbf{k}_{|u|}}\in\Pi_{m_{u}-1}^{u}$.
This means that $\psi_{u\mathbf{j}_{|u|}}$ is orthogonal to all polynomials
of lower degrees, but it may not be orthogonal to other orthogonal
polynomials of the same degree. Define $\mathcal{V}_{m_{u}}^{u}:=\{\psi_{u\mathbf{j}_{|u|}}\in\Pi_{m_{u}}^{u}:(\psi_{u\mathbf{j}_{|u|}},\psi_{u\mathbf{k}_{|u|}})_{f_{u}}=0,\:\psi_{u\mathbf{k}_{|u|}}\in\Pi_{m_{u}-1}^{u}\}$
as the space of orthogonal polynomials of degree of exactly $m_{u}$.
It is elementary to show that the $\dim\mathcal{V}_{m_{u}}^{u}=\dim\mathcal{P}_{m_{u}}^{u}$.
If, in addition, $(\psi_{u\mathbf{j}_{|u|}},\psi_{u\mathbf{j}_{|u|}})_{f_{u}}=\mathbb{E}\left[\psi_{u\mathbf{j}_{|u|}}^{2}(\mathbf{X}_{u})\right]=1$,
then $\psi_{u\mathbf{j}_{|u|}}$ is called an orthonormal polynomial
in $\mathbf{x}_{u}$ of degree $|\mathbf{j}_{|u|}|$, to be used in
the remainder of this paper.

\subsection{Fourier-Polynomial Expansions}

Let $\{\psi_{u\mathbf{j}_{|u|}}(\mathbf{X}_{u}),\,\mathbf{j}_{|u|}\in\mathbb{N}_{0}^{|u|}\}$
be a set of multivariate orthonormal polynomials that is consistent
with the probability measure $f_{u}(\mathbf{x}_{u})d\mathbf{x}_{u}$
of $\mathbf{X}_{u}$. For $\emptyset\ne u=\{i_{1},\cdots,i_{|u|}\}\subseteq\{1,\cdots,N\}$,
where $1\le|u|\le N$ and $1\le i_{1}<\cdots<i_{|u|}\le N$, let $(\Omega_{u},\mathcal{F}_{u},P_{u})$
be the probability triple of $\mathbf{X}_{u}=(X_{i_{1}},\cdots,X_{i_{|u|}})$.
Denote the associated space of the $|u|$-variate component functions
of $y$ by
\[
\mathcal{L}_{2}(\Omega_{u},\mathcal{F}_{u},P_{u}):=
\left\{y_{u,G}:\int_{\mathbb{R}^{|u|}}y_{u,G}^{2}(\mathbf{x}_{u})f_{u}(\mathbf{x}_{u})d\mathbf{x}_{u}<\infty
\right\},
\]
which is a Hilbert space. Then $\{\psi_{u\mathbf{j}_{|u|}}(\mathbf{X}_{u}),\,\mathbf{j}_{|u|}\in\mathbb{N}_{0}^{|u|},j_{1},\cdots,j_{|u|} \neq 0\}$,
if it is dense, constitutes a basis of $\mathcal{L}_{2}(\Omega_{u},\mathcal{F}_{u},P_{u})$.
The standard Hilbert space theory states that every non-constant component function $y_{u,G}\in\mathcal{L}_{2}(\Omega_{u},\mathcal{F}_{u},P_{u})$
of $y$ can be expanded as \cite{dunkl01}
\begin{equation}
y_{u,G}(\mathbf{X}_{u})={\displaystyle \sum_{{\textstyle {\mathbf{j}_{|u|}\in\mathbb{N}_{0}^{|u|}\atop j_{1},\cdots,j_{|u|}\neq0}}}}C_{u\mathbf{j}_{|u|}}\psi_{u\mathbf{j}_{|u|}}(\mathbf{X}_{u})\label{16}
\end{equation}
with
\begin{equation}
C_{u\mathbf{j}_{|u|}}:=\int_{\mathbb{R}^{|u|}}y_{u,G}(\mathbf{x}_{u})\psi_{u\mathbf{j}_{|u|}}(\mathbf{x}_{u})f_{u}(\mathbf{x}_{u})d\mathbf{x}_{u}
\label{17}
\end{equation}
defining associated expansion coefficients. Note that the summation
in (\ref{16}) precludes $j_{1},\cdots,j_{|u|}=0$, that is, the individual
degree of each variable $X_{i}$ in $\psi_{u\mathbf{j}_{|u|}}$, $i\in u$, cannot be \emph{zero} since $y_{u,G}$ is a strictly $|u|$-variate function and has a \emph{zero} mean following Proposition \ref{prop:3}. For a more precise interpretation, the selection of multivariate Hermite polynomials as basis functions is described as follows.

Consider quadratic approximations of the univariate ($|u|=1$) and bivariate ($|u|=2$) component functions of $y(\mathbf{X})$, where $\mathbf{X}=(X_{1},\cdots,X_{N})$ is a \emph{zero}-mean, $N$-dimensional Gaussian random vector with positive-definite covariance matrix $\mathbf{\Sigma}_{\mathbf{X}}=\mathbb{E}[\mathbf{X}\mathbf{X}{}^{T}]=[\rho_{ij}\sigma_i\sigma_j]$, comprising variances $\sigma_{i}^{2}=1$ of $X_{i}$
and correlation coefficients $\rho_{ij}$ between $X_{i}$ and $X_{j}$,
$i,j=1,\cdots,N$, and joint probability density function
\begin{equation}
f_{\mathbf{X}}(\mathbf{x})=\left(2\pi\right)^{-\frac{N}{2}}\left(\det\mathbf{\Sigma}_{\mathbf{X}}\right)^{-\frac{1}{2}}\exp\left[-\frac{1}{2}\mathbf{x}{}^{T}\mathbf{\Sigma}_{\mathbf{X}}^{-1}\mathbf{x}\right]=:\phi_{\mathbf{X}}(\mathbf{x};\mathbf{\Sigma}_{\mathbf{X}}).
\label{r9}
\end{equation}
The marginal probability densities of $\mathbf{X}_{u}$, $\emptyset\ne u\subseteq\{1,\cdots,N\}$, are also Gaussian, and are easily derived as
\begin{equation}
f_{u}(\mathbf{x}_{u})=\left(2\pi\right)^{-\frac{|u|}{2}}\left(\det\mathbf{\Sigma}_{u}\right)^{-\frac{1}{2}}\exp\left[-\frac{1}{2}\mathbf{x}_{u}{}^{T}\mathbf{\Sigma}_{u}^{-1}\mathbf{x}_{u}\right]=:\phi_{u}(\mathbf{x}_{u};\mathbf{\Sigma}_{u}),
\label{r10}
\end{equation}
where $\mathbf{\Sigma}_{u}:=\mathbb{E}[\mathbf{X}_{u}\mathbf{X}_{u}{}^{T}]$
is the covariance matrix of $\mathbf{X}_{u}$. The probability density
$\phi_{u}(\mathbf{x}_{u};\mathbf{\Sigma}_{u})$ induces multivariate Hermite orthogonal polynomials
\begin{equation}
\tilde{\psi}_{u\mathbf{j}_{|u|}}(\mathbf{x}_{u})=
\frac{(-1)^{|\mathbf{j}_{|u|}|}}{\phi_{u}(\mathbf{x}_{u};\mathbf{\Sigma}_{u})}\left(\frac{\partial}{\partial\mathbf{x}_{u}}\right)^{\mathbf{j}_{|u|}}\phi_{u}(\mathbf{x}_{u};\mathbf{\Sigma}_{u}),\;\mathbf{j}_{|u|}\in\mathbb{N}_{0}^{|u|},
\label{r4}
\end{equation}
where ${\displaystyle (\partial/\partial\mathbf{x}_{u})^{\mathbf{j}_{|u|}}:=\partial^{j_{1}+\cdots+j_{|u|}}/\partial x_{i_{1}}^{j_{1}}\cdots\partial x_{i_{|u|}}^{j_{|u|}}}$.
They eventually form a set of multivariate Hermite orthonormal polynomials
\begin{equation}
\left\{\psi_{u\mathbf{j}_{|u|}}:=\tilde{\psi}_{u\mathbf{j}_{|u|}}/(\tilde{\psi}_{u\mathbf{j}_{|u|}},\tilde{\psi}_{u\mathbf{j}_{|u|}})_{\phi_{u}},\,\mathbf{j}_{|u|}\in\mathbb{N}_{0}^{|u|}\right\}
\label{r5}
\end{equation}
that are consistent with the probability measure $\phi_{u}(\mathbf{x}_{u};\mathbf{\Sigma}_{u})d\mathbf{x}_{u}$ of $\mathbf{X}_{u}$. For example, when $u=\{i\}$, $i=1,\cdots,N$, and $j_1\le 2$, (\ref{r4}) and (\ref{r5}) yield
\[
\psi_{\{i\}0}(X_i)=1,~\psi_{\{i\}1}(X_i)=X_i,~\psi_{\{i\}2}(X_i)=\frac{X_i^2-1}{\sqrt{2}},
\]
the sequence of orthonormal polynomials for quadratic approximation of any square-integrable univariate function of $X_i$. Clearly, the complete basis set for a general function is $\{\psi_{\{i\}0},\psi_{\{i\}1},\psi_{\{i\}2}\}$.
However, since $\mathbb{E}[y_{\{i\},G}(X_i)]=0$ as per Proposition \ref{prop:3}, only a linear combination of $\psi_{\{i\}1}$ and $\psi_{\{i\}2}$, without including $\psi_{\{i\}0}$, that is, the
basis subset $\{\psi_{\{i\}1}, \psi_{\{i\}2}\}$
is sufficient to approximate $y_{\{i\},G}$. Similarly, when $u=\{i_1,i_2\}$, $i_1=1,\cdots,N-1$, $i_2=i_1+1,\cdots,N$, and $|\mathbf{j}_{2}|\le 2$, (\ref{r4}) and (\ref{r5}) result in
\[
\begin{array}{c}
\psi_{\{i_1,i_2\}00}(X_{i_1},X_{i_2})=1,\\
\psi_{\{i_1,i_2\}10}(X_{i_1},X_{i_2})=\dfrac{X_{i_1}-\rho_{i_1 i_2}X_{i_2}}{\sqrt{1-\rho_{i_1 i_2}^2}},~
\psi_{\{i_1,i_2\}01}(X_{i_1},X_{i_2})=\dfrac{X_{i_2}-\rho_{i_1 i_2}X_{i_1}}{\sqrt{1-\rho_{i_1 i_2}^2}},\\
\psi_{\{i_1,i_2\}20}(X_{i_1},X_{i_2})=\dfrac{X_{i_1}^2+\rho_{i_1 i_2}^2 \left( 1+X_{i_2}^2 \right) -
2 \rho_{i_1 i_2} X_{i_1} X_{i_2} - 1}{\sqrt{2}\left( 1 - \rho_{i_1 i_2}^2 \right)},\\
\psi_{\{i_1,i_2\}02}(X_{i_1},X_{i_2})=\dfrac{X_{i_2}^2+\rho_{i_1 i_2}^2 \left( 1+X_{i_1}^2 \right) -
2 \rho_{i_1 i_2} X_{i_1} X_{i_2} - 1}{\sqrt{2}\left( 1 - \rho_{i_1 i_2}^2 \right)},\\
\psi_{\{i_1,i_2\}11}(X_{i_1},X_{i_2})=\dfrac{\sqrt{1+\rho_{i_{1}i_{2}}^2}}{\rho_{i_{1}i_{2}}^2-1}
\!\left[
\dfrac{\rho_{i_{1}i_{2}}\left(X_{i_1}^2+X_{i_2}^2\right)}{1+\rho_{i_{1}i_{2}}^2}\!-\!
X_{i_1}X_{i_2}\!+\!
\dfrac{\rho_{i_{1}i_{2}}\left(\rho_{i_{1}i_{2}}^2-1\right)}{1+\rho_{i_{1}i_{2}}^2}
\right],
\end{array}
\]
a sequence of orthonormal polynomials for quadratic approximation of any square-integrable bivariate function of $X_{i_1}$ and $X_{i_2}$.  There are multiple ways to choose a set of basis functions for $y_{\{i_1,i_2\},G}$.  The author proposes selecting a nested basis set $\{\psi_{\{i_1\}1}, \psi_{\{i_1\}2}, \psi_{\{i_2\}1}, \psi_{\{i_2\}2},\psi_{\{i_1,i_2\}11}\}$, which subsumes the basis functions for $y_{\{i_1\},G}$ and $y_{\{i_2\},G}$.  It is elementary to show that the members of such a nested basis set have \emph{zero} means and are hierarchically orthogonal, that is, $\mathbb{E}[\psi_{\{i_1,i_2\}j_1 j_2}(X_{i_1},X_{i_2})\\\psi_{\{i_1\}j_1}(X_{i_1})]=0$ and $\mathbb{E}[\psi_{\{i_1,i_2\}j_1 j_2}(X_{i_1},X_{i_2})\psi_{\{i_2\}j_2}(X_{i_2})]=0$ for $|\mathbf{j}_{2}|\le 2$, $j_1, j_2 \ne 0$.  Again, since $\mathbb{E}[y_{\{i_1,i_2\},G}(X_{i_1},X_{i_2})]=0$ following Proposition \ref{prop:3}, a constant multiplier of $\psi_{\{i_1,i_2\}11}$, excluding $\psi_{\{i_1\}1}$, $\psi_{\{i_1\}2}$, $\psi_{\{i_2\}1}$, $\psi_{\{i_2\}2}$, that is, the basis subset $\{\psi_{\{i_1,i_2\}11}\}$ is
adequate to approximate $y_{\{i_1,i_2\},G}$.  In both instances, the power $j_{k}$ for each $X_{i_k}$, $k\in u$, of the monomial $\mathbf{X}_{u}^{\mathbf{j}_{|u|}}:=X_{i_{1}}^{j_{1}} \times \cdots \times X_{i_{|u|}}^{j_{|u|}}$ in the basis subset, whether $|u|=1$ or $|u|=2$, is not \emph{zero}.  Therefore, the condition $j_{1},\cdots,j_{|u|}\neq0$ is required for selecting the basis functions of a general $|u|$-variate component function in (\ref{16}) and (\ref{17}).

The selection of nested basis functions, as explained in the preceding paragraph for $u=\{i_1,i_2\}$ and $v=\{i_1\}$ or $\{i_2\}$ and Gaussian probability measure, easily extends to a general $|u|$-variate function and general probability measure $f_{u}(\mathbf{x}_{u})d\mathbf{x}_{u}$.  Given $\emptyset\neq u\subseteq\{1,\cdots,N\}$, let $\{\psi_{v\mathbf{k}_{|v|}}(\mathbf{X}_{v}), \emptyset\ne v \subseteq u, \mathbf{k}_{|v|}\in\mathbb{N}_{0}^{|v|}, k_{1},\cdots,k_{|v|} \neq 0  \}$
be a nested set of measure-consistent orthonormal polynomial basis functions for $y_{u,G}$, which comprises as a subset measure-consistent orthonormal polynomial basis functions for
$y_{v,G}$, $\emptyset\ne v \subset u$.  Then, from fundamental properties of multivariate orthogonal polynomials, (1) $\psi_{u\mathbf{j}_{|u|}}(\mathbf{X}_{u})$ has a \emph{zero} mean for any $\emptyset\ne u\subseteq\{1,\cdots,N\}$ and $j_{1},\cdots,j_{|u|} \neq 0$; and (2) $\psi_{u\mathbf{j}_{|u|}}(\mathbf{X}_{u})$ is orthogonal to $\psi_{v\mathbf{k}_{|v|}}(\mathbf{X}_{v})$ for any $\emptyset\ne v \subset u$, $j_{1},\cdots,j_{|u|} \neq 0$, and $k_{1},\cdots,k_{|v|} \neq 0$.  Therefore, the \emph{zero}-mean property of $y_{u,G}$ and hierarchical orthogonality between $y_{u,G}$ and $y_{v,G}$, $\emptyset\ne v \subset u$, as required by Propositions \ref{prop:3} and \ref{prop:4}, are naturally satisfied.

The constant function $y_{\emptyset,G}$ defined in (\ref{11a}) is an $N$-dimensional integral, which must be calculated or estimated by some means. The evaluation of non-constant component
functions $y_{u,G}(\mathbf{x}_{u})$, $\emptyset\ne u\subseteq\{1,\cdots,N\}$,
requires calculation of the expansion coefficients defined in (\ref{17}),
which are similar integrals on at most $\mathbb{R}^{N}$. But, since
$y_{u,G}$ is unknown, the coefficients cannot be determined from
their definitions alone. Two new results, Theorem \ref{thm:8} and
Corollary \ref{corr:9}, describe how these coefficients can be calculated
from the solution of a linear system of algebraic equations.

\begin{theorem}
\label{thm:8}
Let $y$ be a square-integrable function of $\mathbf{X}$, admitting a generalized ADD, where $\mathbf{X}=(X_{1},\cdots,X_{N})$ is an $\mathbb{R}^{N}$-valued
dependent random vector with an arbitrary non-product type joint probability
density function $f_{\mathbf{X}}:\mathbb{R}^{N}\to\mathbb{R}_{0}^{+}$ and a
marginal probability density function $f_{u}$ of $\mathbf{X}_{u}$.
Given $\emptyset\neq u\subseteq\{1,\cdots,N\}$, let
$\{\psi_{v\mathbf{k}_{|v|}}(\mathbf{X}_{v}), \emptyset\neq v \subseteq u, \mathbf{k}_{|v|}\in\mathbb{N}_{0}^{|v|}, k_{1},\cdots,k_{|v|} \neq 0  \}$
be a nested set of measure-consistent orthonormal polynomial basis functions such that $\mathbb{E}[\psi_{u\mathbf{j}_{|u|}}(\mathbf{X}_{u})]=0$ and $\mathbb{E}[\psi_{u\mathbf{j}_{|u|}}(\mathbf{X}_{u})\psi_{v\mathbf{k}_{|v|}}(\mathbf{X}_{v})]=0$ for $\emptyset\ne v \subset u$, $j_{1},\cdots,j_{|u|} \neq 0$ and $k_{1},\cdots,k_{|v|} \neq 0$. Then the expansion coefficients of the polynomial representation of non-constant component functions of $y$ in (\ref{16}) and (\ref{17}) satisfy

\begin{equation}
C_{u\mathbf{j}_{|u|}} +
\sum_{{\textstyle {\emptyset\ne v\subseteq\{1,\cdots,N\}\atop v\cap u\ne\emptyset,v\nsubseteq u}}}{\displaystyle \sum_{{\textstyle {\mathbf{k}_{|v|}\in\mathbb{N}_{0}^{|v|}\atop k_{1},\cdots,k_{|v|}\neq0}}}}
C_{v\mathbf{k}_{|v|}}J_{u\mathbf{j}_{|u|},v\mathbf{k}_{|v|}}=I_{u\mathbf{j}_{|u|}},\label{20}
\end{equation}
where the integrals
\begin{subequations}
\begin{align}
I_{u\mathbf{j}_{|u|}} & :=\int_{\mathbb{R}^{N}}y(\mathbf{x})\psi_{u\mathbf{j}_{|u|}}(\mathbf{x}_{u})f_{u}(\mathbf{x}_{u})f_{-u}(\mathbf{x}_{-u})d\mathbf{x},\label{21a}\\
J_{u\mathbf{j}_{|u|},v\mathbf{k}_{|v|}} &
:={\displaystyle \int_{\mathbb{R}^{|v\cup u|}}\psi_{u\mathbf{j}_{|u|}}(\mathbf{x}_{u})\psi_{v\mathbf{k}_{|v|}}(\mathbf{x}_{v})f_{u}(\mathbf{x}_{u})f_{v\cap-u}(\mathbf{x}_{v\cap-u})d\mathbf{x}_{v\cup u}}.\label{21b}
\end{align}
\label{21} \end{subequations}
\end{theorem}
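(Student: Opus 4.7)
The plan is to derive (\ref{20}) directly from the master equation (\ref{11b}) in Theorem \ref{thm:6} by taking an inner product against a single basis function. Specifically, starting from (\ref{11b}), I would multiply both sides by $\psi_{u\mathbf{j}_{|u|}}(\mathbf{x}_{u})f_{u}(\mathbf{x}_{u})$ (with $j_{1},\cdots,j_{|u|}\neq 0$) and integrate over $\mathbf{x}_{u}\in\mathbb{R}^{|u|}$. By the definition of the Fourier coefficient in (\ref{17}), the left-hand side collapses to $C_{u\mathbf{j}_{|u|}}$, so the work is to identify the three pieces on the right.

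For the first piece, Fubini's theorem merges the outer $d\mathbf{x}_{u}$ integral with the inner $f_{-u}(\mathbf{x}_{-u})d\mathbf{x}_{-u}$ integral to produce exactly the integral $I_{u\mathbf{j}_{|u|}}$ defined in (\ref{21a}). For the second piece, the sum $\sum_{v\subset u}y_{v,G}(\mathbf{X}_{v})$ (including $v=\emptyset$), I would expand each $y_{v,G}$ in the nested basis $\{\psi_{v\mathbf{k}_{|v|}}\}$ and use the hierarchical orthogonality hypothesis $\mathbb{E}[\psi_{u\mathbf{j}_{|u|}}(\mathbf{X}_{u})\psi_{v\mathbf{k}_{|v|}}(\mathbf{X}_{v})]=0$ for $v\subset u$, together with $\mathbb{E}[\psi_{u\mathbf{j}_{|u|}}(\mathbf{X}_{u})]=0$ to handle the $v=\emptyset$ term. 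Every inner product vanishes and this entire block drops out.

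For the third piece, I would again expand each $y_{v,G}(\mathbf{x}_{v})$ using the Fourier representation (\ref{16}), swap the order of summation and integration (justified by $\mathcal{L}_{2}$ convergence of the Fourier series), and obtain a double integral of the form
\begin{equation*}
\int_{\mathbb{R}^{|u|}}\psi_{u\mathbf{j}_{|u|}}(\mathbf{x}_{u})f_{u}(\mathbf{x}_{u})\int_{\mathbb{R}^{|v\cap-u|}}\psi_{v\mathbf{k}_{|v|}}(\mathbf{x}_{v})f_{v\cap-u}(\mathbf{x}_{v\cap-u})d\mathbf{x}_{v\cap-u}d\mathbf{x}_{u}.
\end{equation*}
The key bookkeeping step, and I expect this to be the one place to be careful, is recognizing that the free coordinates in $\mathbf{x}_{u}$ together with the integration variables $\mathbf{x}_{v\cap-u}$ span exactly the index set $v\cup u$ (since $\psi_{v\mathbf{k}_{|v|}}$ depends on $\mathbf{x}_{v\cap u}\subseteq\mathbf{x}_{u}$ and $\mathbf{x}_{v\cap-u}$), so by Fubini the combined integral is precisely $J_{u\mathbf{j}_{|u|},v\mathbf{k}_{|v|}}$ as defined in (\ref{21b}). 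Moving this term to the left-hand side yields (\ref{20}).

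The main obstacles are therefore clerical rather than conceptual: correctly partitioning $v$ as $(v\cap u)\cup(v\cap-u)$ so that the measure $f_{u}(\mathbf{x}_{u})f_{v\cap-u}(\mathbf{x}_{v\cap-u})d\mathbf{x}_{v\cup u}$ matches (\ref{21b}), and justifying the interchange of the Fourier sum with the integral against $\psi_{u\mathbf{j}_{|u|}}f_{u}$. The latter follows from $y_{v,G}\in\mathcal{L}_{2}(\Omega_{v},\mathcal{F}_{v},P_{v})$ and continuity of the inner product, so no additional hypothesis beyond square-integrability of $y$ is needed.
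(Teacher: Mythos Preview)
Your proposal is correct and follows essentially the same approach as the paper: starting from (\ref{11b}), taking the inner product with $\psi_{u\mathbf{j}_{|u|}}$ against $f_{u}$, using $\mathbb{E}[\psi_{u\mathbf{j}_{|u|}}]=0$ to drop the $v=\emptyset$ term, expanding the remaining $y_{v,G}$ via (\ref{16}), invoking the nested hierarchical orthogonality to eliminate the $\emptyset\neq v\subset u$ block, and recognizing the surviving integrals as $I_{u\mathbf{j}_{|u|}}$ and $J_{u\mathbf{j}_{|u|},v\mathbf{k}_{|v|}}$. Your added remarks on Fubini bookkeeping for $v\cup u$ and the $\mathcal{L}_{2}$ justification of the sum--integral interchange are slightly more explicit than the paper, but the argument is the same.
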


\begin{proof}
Replace $y_{u,G}$ in (\ref{17}) with the right side of (\ref{11b})
to write
\begin{equation}
\begin{array}{rcl}
C_{u\mathbf{j}_{|u|}} & = & \int_{\mathbb{R}^{N}}y(\mathbf{x})\psi_{u\mathbf{j}_{|u|}}(\mathbf{x}_{u})f_{u}(\mathbf{x}_{u})f_{-u}(\mathbf{x}_{-u})d\mathbf{x}-\\
 &  & {\displaystyle \sum_{\emptyset\ne v\subset u}}\int_{\mathbb{R}^{|u|}}y_{v,G}(\mathbf{x}_{v})\psi_{u\mathbf{j}_{|u|}}(\mathbf{x}_{u})f_{u}(\mathbf{x}_{u})d\mathbf{x}_{u}-\\
 &  & {\displaystyle \sum_{{\textstyle {\emptyset\ne v\subseteq\{1,\cdots,N\}\atop v\cap u\ne\emptyset,v\nsubseteq u}}}{\displaystyle \int_{\mathbb{R}^{|v\cup u|}}y_{v,G}(\mathbf{X}_{v})\psi_{u\mathbf{j}_{|u|}}(\mathbf{x}_{u})f_{u}(\mathbf{x}_{u})f_{v\cap-u}(\mathbf{x}_{v\cap-u})d\mathbf{x}_{v\cup u}}},
\end{array}\label{22}
\end{equation}
where in the second line the integral associated with $v=\emptyset$,
that is,
\[
\int_{\mathbb{R}^{|u|}}y_{\emptyset,G}\psi_{u\mathbf{j}_{|u|}}(\mathbf{x}_{u})f_{u}(\mathbf{x}_{u})d\mathbf{x}_{u}
=y_{\emptyset,G}\mathbb{E}\left[\psi_{u\mathbf{j}_{|u|}}(\mathbf{X}_{u})\right]
\]
drops out for all $\emptyset\ne u\subseteq\{1,\cdots,N\}$, consistent
with the definition of orthonormal polynomials. Now substitute all
component functions of $y$ involved in (\ref{22}) with their Fourier-polynomial
expansions, as described by (\ref{16}), which results in
\begin{equation}
\begin{array}{rcl}
C_{u\mathbf{j}_{|u|}} & = & \int_{\mathbb{R}^{N}}y(\mathbf{x})\psi_{u\mathbf{j}_{|u|}}(\mathbf{x}_{u})f_{u}(\mathbf{x}_{u})f_{-u}(\mathbf{x}_{-u})d\mathbf{x}-\\
 &  & {\displaystyle \sum_{\emptyset\ne v\subset u}}{\displaystyle \sum_{{\textstyle {\mathbf{k}_{|v|}\in\mathbb{N}_{0}^{|v|}\atop k_{1},\cdots,k_{|v|}\neq0}}}}C_{v\mathbf{k}_{|v|}}\int_{\mathbb{R}^{|u|}}\psi_{u\mathbf{j}_{|u|}}(\mathbf{x}_{u})\psi_{v\mathbf{k}_{|v|}}(\mathbf{x}_{v})f_{u}(\mathbf{x}_{u})d\mathbf{x}_{u}-\\
 &  & {\displaystyle \sum_{{\textstyle {\emptyset\ne v\subseteq\{1,\cdots,N\}\atop v\cap u\ne\emptyset,v\nsubseteq u}}}{\displaystyle \sum_{{\textstyle {\mathbf{k}_{|v|}\in\mathbb{N}_{0}^{|v|}\atop k_{1},\cdots,k_{|v|}\neq0}}}}C_{v\mathbf{k}_{|v|}}}\times\\
 &  & {\displaystyle \int_{\mathbb{R}^{|v\cup u|}}\psi_{u\mathbf{j}_{|u|}}(\mathbf{x}_{u})\psi_{v\mathbf{k}_{|v|}}(\mathbf{x}_{v})f_{u}(\mathbf{x}_{u})f_{v\cap-u}(\mathbf{x}_{v\cap-u})d\mathbf{x}_{v\cup u}}.
\end{array}
\label{r6}
\end{equation}
From fundamental properties of orthogonal polynomials and nested construction of basis functions, $\psi_{u\mathbf{j}_{|u|}}(\mathbf{X}_{u})$ is orthogonal to $\psi_{v\mathbf{k}_{|v|}}(\mathbf{X}_{v})$, that is, the expectation or the integral
\begin{equation*}
\mathbb{E}\left[\psi_{u\mathbf{j}_{|u|}}(\mathbf{X}_{u})\psi_{v\mathbf{k}_{|v|}}(\mathbf{X}_{v})\right]=
\int_{\mathbb{R}^{|u|}}\psi_{u\mathbf{j}_{|u|}}(\mathbf{x}_{u})\psi_{v\mathbf{k}_{|v|}}(\mathbf{x}_{v})f_{u}(\mathbf{x}_{u})d\mathbf{x}_{u}=0
\end{equation*}
for any $\emptyset\ne v \subset u$, $j_{1},\cdots,j_{|u|} \neq 0$, and $k_{1},\cdots,k_{|v|} \neq 0$.
Therefore, (\ref{r6}) reduces to
\begin{equation}
\begin{array}{rcl}
C_{u\mathbf{j}_{|u|}} & = & \int_{\mathbb{R}^{N}}y(\mathbf{x})\psi_{u\mathbf{j}_{|u|}}(\mathbf{x}_{u})f_{u}(\mathbf{x}_{u})f_{-u}(\mathbf{x}_{-u})d\mathbf{x}-\\
 &  & {\displaystyle \sum_{{\textstyle {\emptyset\ne v\subseteq\{1,\cdots,N\}\atop v\cap u\ne\emptyset,v\nsubseteq u}}}{\displaystyle \sum_{{\textstyle {\mathbf{k}_{|v|}\in\mathbb{N}_{0}^{|v|}\atop k_{1},\cdots,k_{|v|}\neq0}}}}C_{v\mathbf{k}_{|v|}}}\times\\
 &  & {\displaystyle \int_{\mathbb{R}^{|v\cup u|}}\psi_{u\mathbf{j}_{|u|}}(\mathbf{x}_{u})\psi_{v\mathbf{k}_{|v|}}(\mathbf{x}_{v})f_{u}(\mathbf{x}_{u})f_{v\cap-u}(\mathbf{x}_{v\cap-u})d\mathbf{x}_{v\cup u}}.
\end{array}\label{23b}
\end{equation}
Defining integrals $I_{u\mathbf{j}_{|u|}}$ and $J_{u\mathbf{j}_{|u|},v\mathbf{k}_{|v|}}$,
as in (\ref{21a}) and (\ref{21b}), (\ref{23b}) simplifies to (\ref{20}),
proving the theorem.\end{proof}

\subsection{Finite-Dimensional Approximation}

Equation (\ref{20}) describes an infinite-dimensional system involving an infinite number of coefficients. Therefore, a finite-dimensional approximation, leading to approximate expansion coefficients and a truncated generalized ADD, must be used in practice.  Corollary \ref{corr:9} provides such a solution.

\begin{corollary}
\label{corr:9}When truncated at $|u|=S$ and $|\mathbf{j}_{|u|}|:=j_{1}+\cdots+j_{|u|}= m$, where $1\le S < N$ and $S\le m < \infty$, the approximate expansion coefficients $\tilde{C}_{u\mathbf{j}_{|u|}}$ satisfy

\begin{equation}
\begin{array}{c}
\tilde{C}_{u\mathbf{j}_{|u|}}+\!\!{\displaystyle
\!\!\sum_{{\textstyle {\emptyset\ne v\subseteq\{1,\cdots,N\}\atop v\cap u\ne\emptyset,v\nsubseteq u}}}}{\displaystyle
\sum_{k=1}^{m}}{\displaystyle
\!\!\sum_{{\textstyle {|\mathbf{k}_{|v|}|=k\atop k_{1},\cdots,k_{|v|}\neq0}}}}\!\!\!\!\!\tilde{C}_{v\mathbf{k}_{|v|}}J_{u\mathbf{j}_{|u|},v\mathbf{k}_{|v|}}=I_{u\mathbf{j}_{|u|}},
~1 \le |u| \le S,~1 \le |\mathbf{j}_{|u|}| \le m,
\label{24}
\end{array}
\end{equation}
and
\begin{equation}
\tilde{C}_{u\mathbf{j}_{|u|}}=0,~1 \le |u| \le S, ~m+1 \le |\mathbf{j}_{|u|}| < \infty;
~S+1 \le |u| \le N, ~1 \le |\mathbf{j}_{|u|}| < \infty.
\label{24b}
\end{equation}

\end{corollary}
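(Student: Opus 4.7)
The plan is to derive (\ref{24}) as a direct truncation of the infinite-dimensional linear system (\ref{20}) supplied by Theorem~\ref{thm:8}. First, I would \emph{define} the finite-dimensional approximation by insisting that every expansion coefficient lying outside the active block $\{(u,\mathbf{j}_{|u|}) : 1 \le |u| \le S,\ 1 \le |\mathbf{j}_{|u|}| \le m\}$ is set to zero; this is precisely the content of (\ref{24b}) and is to be read as the defining rule of the truncation, not as a deduction.

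Next, I would redo the derivation leading to Theorem~\ref{thm:8} with the truncated polynomial representation of every $y_{u,G}$ in force. Equivalently, one can start from the already-derived identity (\ref{20}), which is linear in the coefficients, and simply note that every term whose coefficient has been declared zero by (\ref{24b}) drops out of the double sum. As a consequence the outer summation over $v$, although still written under the same constraints $v \cap u \ne \emptyset$ and $v \nsubseteq u$, contributes nothing for $|v| > S$, and the inner summation collapses from $\mathbf{k}_{|v|} \in \mathbb{N}_{0}^{|v|}$ with nonzero components to the finite index set $\{\mathbf{k}_{|v|} : 1 \le |\mathbf{k}_{|v|}| \le m,\, k_1,\dots,k_{|v|} \ne 0\}$, which is exactly the nested double sum $\sum_{k=1}^{m}\sum_{|\mathbf{k}_{|v|}|=k,\, k_1,\dots,k_{|v|}\ne 0}$ appearing in (\ref{24}).

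Finally, restricting the test index $(u,\mathbf{j}_{|u|})$ to the active block $1 \le |u| \le S$, $1 \le |\mathbf{j}_{|u|}| \le m$ yields one equation of the form (\ref{24}) for each such index, which is exactly the claim. I do not expect any substantive obstacle: the integrals $I_{u\mathbf{j}_{|u|}}$ and $J_{u\mathbf{j}_{|u|},v\mathbf{k}_{|v|}}$ retain the definitions (\ref{21a})--(\ref{21b}), and the hierarchical-orthogonality step that eliminated the $\emptyset \ne v \subset u$ contribution in the proof of Theorem~\ref{thm:8} carries over verbatim, because the retained basis functions $\psi_{v\mathbf{k}_{|v|}}$ with $|v| < |u|$ remain those selected by the nested construction. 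The only real care required is careful bookkeeping on the index ranges so that no admissible $(v,\mathbf{k}_{|v|})$ pair is inadvertently dropped or double-counted; questions of existence or invertibility of the resulting finite linear system are not part of the statement and therefore need not be addressed in the proof.
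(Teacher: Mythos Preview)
Your proposal is correct, and there is nothing to compare it against: the paper states Corollary~\ref{corr:9} without proof, treating it as an immediate consequence of Theorem~\ref{thm:8} obtained by truncation. Your write-up supplies exactly the natural argument the paper leaves implicit---namely, that (\ref{24b}) is the \emph{definition} of the truncation and (\ref{24}) then follows from (\ref{20}) by discarding the nullified terms and restricting the test indices---so it is fully consistent with the paper's intent.
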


Expressed compactly, the system of equations in (\ref{24}) for non-trivial solutions of $\tilde{C}_{u\mathbf{j}_{|u|}}$
forms an $L_{S,m}\times L_{S,m}$ matrix equation: $\mathbf{A}\mathbf{z}=\mathbf{b}$,
where $\mathbf{A}\in\mathbb{R}^{L_{S,m}\times L_{S,m}}$ contains integrals $J_{u\mathbf{j}_{|u|},v\mathbf{k}_{|v|}}$,
$\mathbf{b}\in\mathbb{R}^{L_{S,m}}$ comprises integrals $I_{u\mathbf{j}_{|u|}}$,
and $\mathbf{z}\in\mathbb{R}^{L_{S,m}}$ is the solution vector of the approximate expansion
coefficients. The size of the matrix equation is
\[
L_{S,m}=\sum_{k=1}^{N}\binom{N}{k}\binom{m}{k},
\]
where $\binom{m}{k}=0$ if $k>m$, when the measure-consistent orthonormal polynomials are constructed by satisfying the condition  $j_{1},\cdots,j_{|u|}\neq0$, as explained previously.  The matrix form of (\ref{24}) is easy to implement and solve, and is scalable to higher dimensions in a straightforward way.  It is elementary to show that $\tilde{C}_{u\mathbf{j}_{|u|}} \to C_{u\mathbf{j}_{|u|}}$ as $S \to N$ and $m \to\infty$.

The truncations introduced in Corollary \ref{corr:9} engender an $S$-variate, $m$th-order generalized ADD approximation
\begin{equation}
\tilde{y}_{S,m}(\mathbf{X})=y_{\emptyset,G}+
{\displaystyle\sum_{{\textstyle {\emptyset\ne u\subseteq\{1,\cdots,N\}\atop 1 \le |u| \le S}}}}
{\displaystyle \sum_{k=1}^{m}}{\displaystyle \sum_{{\textstyle {|\mathbf{j}_{|u|}|=k\atop j_{1},\cdots,j_{|u|}\neq0}}}}
\tilde{C}_{u\mathbf{j}_{|u|}}\psi_{u\mathbf{j}_{|u|}}
\label{r7}
\end{equation}
of $y(\mathbf{X})$ in (\ref{7}), which is grounded on a fundamental conjecture known to be true in many real-world applications: given a high-dimensional function $y$, its $|u|$-variate component functions decay rapidly with respect to $|u|$, leading to accurate lower-variate approximations of $y$.  For instance, by selecting $S=1$ or $2$, the functions $\tilde{y}_{1,m}$ and $\tilde{y}_{2,m}$, respectively, provide univariate and bivariate approximations of $y$.  The higher the value of $S$ and/or $m$, the higher the accuracy, but also the concomitant computational effort.  When $S\to N$ and $m\to \infty$, $\tilde{y}_{S,m}$ converges to $y$ in the mean-square sense, generating a hierarchical and convergent sequence of approximations.

The computational effort in determining the expansion coefficients $y_{\emptyset,G}$ and $\tilde{C}_{u\mathbf{j}_{|u|}}$ is rooted in efficient and accurate calculations of various $N$-dimensional integrals, including $I_{u\mathbf{j}_{|u|}}$, $u\subseteq\{1,\cdots,N\}$, $1 \le |u| \le S$, $|\mathbf{j}_{|u|}|\le m$.  For large $N$, a full numerical integration employing an $N$-dimensional tensor product of a univariate quadrature rule is computationally prohibitive.  Instead, a dimension-reduction integration scheme, developed by Xu and Rahman \cite{xu04}, can be applied to estimate the coefficients efficiently.  The scheme entails approximating a high-dimensional integral of interest by a finite sum of lower-dimensional integrations.  The computational complexity is $S$th-order polynomial $-$ for instance, linear or quadratic when $S=1$ or 2 $-$ with respect to the number of variables or integration points, alleviating the curse of dimensionality to an extent determined by $S$.  See the work of Xu and Rahman \cite{xu04} for further details.

If the function $y$ is a sum of at most $S$-variate, $m$th-order polynomials, then (\ref{24}) yields the exact solution of the expansion coefficients $C_{u\mathbf{j}_{|u|}}$ and (\ref{r7}) exactly reproduces $y$, provided that the integrals $I_{u\mathbf{j}_{|u|}}$ and $J_{u\mathbf{j}_{|u|},v\mathbf{k}_{|v|}}$ are calculated exactly. Numerical results corroborating theoretical findings will be presented in Section 7.

\section{Second-Moment Analysis}

Once the component functions of the generalized ADD have been determined,
subsequent evaluations of their second-moment characteristics, including
global sensitivity analysis, are conducted as follows.

\subsection{Mean and Variance}

Applying the expectation operator on (\ref{7}) and noting Proposition
\ref{prop:3}, the mean
\begin{equation}
\mu:=\mathbb{E}[y(\mathbf{X})]=y_{\emptyset,G}\label{26}
\end{equation}
of $y(\mathbf{X})$ matches the constant component function of the
generalized ADD. This is similar to (\ref{5}), the result from the
classical ADD, although the respective constants involved are not
the same. Applying the expectation operator again, this time on $(y(\mathbf{X})-\mu)^{2}$,
and recognizing Proposition \ref{prop:4} results in the variance
\begin{equation}
\sigma^{2}:=\mathbb{E}\left[\left(y(\mathbf{X})-\mu\right)^{2}\right]={\displaystyle \sum_{\emptyset\neq u\subseteq\{1,\cdots,N\}}\!\!\!\!\!\!\!\!\!\!\mathbb{E}\left[y_{u,G}^{2}(\mathbf{X}_{u})\right]}+\!\sum_{{\textstyle {\emptyset\ne u,v\subseteq\{1,\cdots,N\}\atop u \nsubseteq v\nsubseteq u}}}\!\!\!\!\!\!\!\!\!\!\mathbb{E}\left[y_{u,G}(\mathbf{X}_{u})y_{v,G}(\mathbf{X}_{v})\right]\label{27}
\end{equation}
of $y(\mathbf{X})$, where the first sum represents variance contributions
from all non-constant component functions. In contrast, the second
sum in (\ref{27}) typifies covariance contributions from two distinct
non-constant component functions that are not orthogonal $-$ a ramification
of imposing the weak annihilating conditions appropriate for the generalized
ADD. The latter sum disappears altogether in the classical ADD because
of the strong annihilating conditions that are possible to enforce
for independent probability measures. Nonetheless, (\ref{26}) and
(\ref{27}) furnish new generalized formulae for the second-moment
statistics of $y(\mathbf{X})$ in terms of the moments of relevant
component functions.

\subsection{Global Sensitivity Indices}

Mathematical modeling of complex systems often requires sensitivity
analysis to determine how an output variable of interest is influenced
by individual or subsets of input variables. A global sensitivity
analysis constitutes the study of how the output variance from a mathematical
model is divvied up, qualitatively or quantitatively, to distinct
sources of input variation in the model \cite{sobol01}. There exist a multitudes of methods or techniques for calculating the global sensitivity indices of a function of independent variables: the random balance design method \cite{tarantola06}, state-dependent parameter metamodel \cite{ratto07}, Sobol's method \cite{sobol93}, polynomial chaos expansion \cite{sudret08}, polynomial dimensional decomposition \cite{rahman11}, non-parametric regression procedures \cite{storlie09}, and many others. In contrast, only a few methods, such as those presented by Li \emph{et al}. \cite{li12}, Kucherenko \emph{et al}. \cite{kucherenko12}, and Chastaing \emph{et al} \cite{chastaing12}, are available for models with dependent or correlated input.  In this section,
a triplet of global sensitivity indices is defined for problems involving
dependent probability distributions of input variables.

Three $|u|$-variate global sensitivity indices of a stochastic response
function $y(\mathbf{X})$ for a subset $\mathbf{X}_{u}$ of input
variables $\mathbf{X}$, denoted by $S_{u,v}$, $S_{u,c}$, and $S_{u}$,
are defined as the ratios
\begin{equation}
S_{u,v}:=\frac{\mathbb{E}\left[y_{u,G}^{2}(\mathbf{X}_{u})\right]}{\sigma^{2}},\label{28}
\end{equation}
\begin{equation}
S_{u,c}:=\frac{{\displaystyle \sum_{{\textstyle {\emptyset\ne v\subseteq\{1,\cdots,N\}\atop u \nsubseteq v\nsubseteq u}}}}\mathbb{E}\left[y_{u,G}(\mathbf{X}_{u})y_{v,G}(\mathbf{X}_{v})\right]}{\sigma^{2}},\label{29}
\end{equation}
\begin{equation}
S_{u}:=S_{u,v}+S_{u,c},\label{30}
\end{equation}
provided that the variance $0<\sigma^{2}<\infty$. The first two indices
$S_{u,v}$ and $S_{u,c}$ represent the normalized versions of the
variance contribution from $y_{u,G}$ to $\sigma^{2}$ and of the
covariance contributions from $y_{u,G}$ and all $y_{v,G}$, such
that $u \nsubseteq v\nsubseteq u$, to $\sigma^{2}$. They will be named the
variance-driven global sensitivity index and the covariance-driven
global sensitivity index, respectively, of $y(\mathbf{X})$ for $\mathbf{X}_{u}$.
The third index $S_{u}$, referred to as the total global sensitivity
index of $y(\mathbf{X})$ for $\mathbf{X}_{u}$, is the sum of variance
and covariance contributions from or associated with $y_{u,G}$ to
$\sigma^{2}$. Since $\emptyset\ne u\subseteq\{1,\cdots,N\}$, there
exist $2^{N}-1$ such triplets of indices, adding up to
\begin{equation}
{\displaystyle \sum_{\emptyset\neq u\subseteq\{1,\cdots,N\}}S_{u}}={\displaystyle \sum_{\emptyset\neq u\subseteq\{1,\cdots,N\}}S_{u,v}}+{\displaystyle \sum_{\emptyset\neq u\subseteq\{1,\cdots,N\}}S_{u,c}}=1.\label{31}
\end{equation}

From the definitions, the variance-driven sensitivity index $S_{u,v}$ is a non-negative, real-valued number.  It reflects the contribution of $\mathbf{X}_{u}$ through $y_u(\mathbf{X}_{u})$ in the system structure of $y(\mathbf{X})$.  In contrast, the covariance-driven sensitivity index $S_{u,v}$  can be negative, positive, or zero, depending on the correlation between $\mathbf{X}_{u}$ and $\mathbf{X}_{v}$.  It represents the contribution of $\mathbf{X}_{u}$ by the interaction of $y_u(\mathbf{X}_{u})$  and $y_v(\mathbf{X}_{v})$, when $u \nsubseteq v\nsubseteq u$, due to dependent probability distribution. Depending on whether $S_{u,c}$ is positive or negative, $S_{u,c}$ strengthens or weakens $S_{u}$, provided that $S_{u}>0$.  The individual sums of these two indices in (\ref{31}) over all $\emptyset\neq u\subseteq\{1,\cdots,N\}$ may exceed unity or be negative, but the sum of these two individual
sums is always equal to \emph{one}. When the random variables are
independent, the covariance-driven contribution to the total sensitivity
index vanishes, leaving behind only one sensitivity index for the
classical ADD. The global sensitivity indices, whether derived from
the generalized or classical ADD, can be used to rank variables, fix
unessential variables, and reduce dimensions of large-scale problems. They also facilitate a means to define effective dimensions of the function $y$, as follows.

Li \emph{et al}. \cite{li10} have presented similar definitions of the three sensitivity indices under the names, structural, correlative, and total sensitivity indices.  However, their correlative sensitivity index, defined by
\begin{equation}
S_{u,c}':=\frac{{\displaystyle \sum_{{\textstyle {\emptyset\ne v\subseteq\{1,\cdots,N\}\atop v\neq u}}}}\mathbb{E}\left[y_{u,G}(\mathbf{X}_{u})y_{v,G}(\mathbf{X}_{v})\right]}{\sigma^{2}},
\label{r8}
\end{equation}
represents an unreduced version of the covariance-driven sensitivity index $S_{u,c}$ defined in (\ref{29}). The difference between these two definitions stems from not recognizing the hierarchical orthogonality condition.  Indeed, using Proposition \ref{prop:4}, the condition $u\neq v$ reduces to $u \nsubseteq v\nsubseteq u$, resulting in $S_{u,c}'=S_{u,c}$.

\subsection{Effective Dimensions}

For many practical applications, the multivariate function $y$ of
$N$ variables, fortunately, can be effectively approximated by a
sum of at most $S$-variate component functions $y_{u,G}$, $1\le|u|\le S\le N$,
of the generalized ADD in (\ref{7}). The truncation can be achieved
by the notion of effective dimension, introduced by Caflisch \emph{et
al}. \cite{caflisch97}, who exploited the classical ADD-based low effective dimension
to explain why the quasi Monte Carlo method outperforms the crude
Monte Carlo algorithm for evaluating a certain class of high-dimensional
integrals. In this section, extended definitions of two generalized
effective dimensions, stemming from the generalized ADD and global
sensitivity indices, are presented.

\begin{definition} \label{def:11}A square-integrable multivariate
function $y$ of $\mathbf{X}\in\mathbb{R}^{N}$ with finite variance
$0<\sigma^{2}<\infty$ has a generalized effective dimension $1\le S_{s}\le N$
in the superposition sense, henceforth denoted as the superposition
dimension, if
\[
S_{s}:=\min\left\{ S:1\le S\le N\;\text{such\;\ that\;} \left|1-\sum_{{\textstyle {\emptyset\ne u\subseteq\{1,\cdots,N\}\atop 1\le|u|\le S}}}S_{u}\right| \le 1-p  \right\}
\]
and a generalized effective dimension $1\le S_{t}\le N$ in the truncation
sense, henceforth denoted as the truncation dimension, if
\[
S_{t}:=\min\left\{ S:1\le S\le N\;\text{such\;\ that\;} \left|1-\sum_{u\subseteq\{1,\cdots,S\}}S_{u}\right| \le 1-p \right\} ,
\]
where $S_{u}$ is the total global sensitivity index of $y(\mathbf{X})$
for $\mathbf{X}_{u}$, $\emptyset\ne u\subseteq\{1,\cdots,N\}$, and
$0\le p\le1$ is a percentile threshold.
\end{definition}

Caflisch \emph{et al}. used the 99th percentile for $p$, but it can
be treated as a threshold parameter linked to the desired accuracy
of a stochastic solution. Both definitions capture the notion in which
the function $y$ is almost $S_{s}$- or $S_{t}$-dimensional. The
relevance of the truncation or superposition dimension depends on
the nature of the function. The former signifies the number of important
random variables and is appropriate when some variables are more important
than others in an ordered set. In contrast, the latter determines
whether the low-variate component functions of dimensional decomposition
dominate the function and is appropriate when all variables are
equally important. For truly high-dimensional problems, all variables
contribute to a function value; therefore, the superposition dimension
is more useful than the truncation dimension.

According to the definitions, evaluations of the generalized effective dimensions require calculating the variance $\sigma^2$ exactly, which is infeasible, if not impossible, for a general function $y$ of an arbitrary number of variables.  However, the $S$-variate, $m$th-order approximation $\tilde{y}_{S,m}(\mathbf{X})$, discussed in conjunction with Corollary \ref{corr:9}, can be used to estimate the variance of $y(\mathbf{X})$, furnishing a practical means to calculate the effective dimensions. In which case, a convergence analysis with respect to $S$ and $m$, or an adaptive version of (\ref{r7}), briefly described in Section 6.4, will be required.

\subsection{Adaptive-Sparse Approximation}
The global sensitivity indices can be exploited to create an adaptive-sparse ADD approximation of a high-dimensional function. Let $\epsilon_{1}\ge0$
and $\epsilon_{2}\ge0$ denote two non-negative error tolerances that specify the minimum values of $\tilde{S}_{u,m_{u}}$ , which is the $m_{u}$th-order approximation of $S_{u}$, and $\Delta\tilde{S}_{u,m_{u}}:=(\tilde{S}_{u,m_{u}}-\tilde{S}_{u,m_{u}-1})/\tilde{S}_{u,m_{u}-1}$, provided that $\tilde{S}_{u,m_{u}-1}\ne 0$ . Then an adaptive-sparse ADD approximation
\begin{equation}
\bar{y}(\mathbf{X}):=y_{\emptyset,G}+{\displaystyle \sum_{\emptyset\ne u\subseteq\{1,\cdots,N\}}}~{\displaystyle \sum_{m_{u}=1}^{\infty}}~\sum_{{\textstyle {\left| \mathbf{j}_{|u|}\right|=m_{u},\, j_{1},\cdots,j_{|u|}\neq0\atop \tilde{S}_{u,m_{u}}>\epsilon_{1},\Delta\tilde{S}_{u,m_{u}}>\epsilon_{2}}}}C_{u\mathbf{j}_{|u|}}\psi_{u\mathbf{j}_{|u|}}(\mathbf{X}_{u})
\nonumber
\end{equation}
of $y(\mathbf{X})$ is formed by the subset of ANOVA component functions, satisfying two inclusion criteria: (1) $\tilde{S}_{u,m_{u}}>\epsilon_{1}$, and (2) $\Delta\tilde{S}_{u,m_{u}}>\epsilon_{2}$
for all $1\le|u|\le N$ and $1\le m_{u}<\infty$. The first criterion requires the contribution of an $m_{u}$-th order polynomial approximation of $y_{u,G}(\mathbf{X}_{u})$ towards the variance of $y(\mathbf{X})$ to exceed $\epsilon_{1}$ in order to be accommodated within the resultant truncation. The second criterion identifies the augmentation in the variance contribution from $y_{u,G}(\mathbf{X}_{u})$ evoked by a single increment in the polynomial order $m_{u}$ and determines if it surpasses $\epsilon_{2}$.  In other words, these two criteria ascertain which interactive effects between two or more input random variables are retained and dictate the largest order of polynomials in a component function, formulating a fully adaptive-sparse ANOVA approximation. No truncation parameters, that is, $S,m$ of the truncated ADD need to be selected \emph{apriori} or arbitrarily. Although successfully developed for independent variables \cite{yadav14}, additional efforts are needed for numerical implementation of the adaptive-sparse approximation for dependent variables.

\section{Examples}
Two illustrative examples, the one entailing an explicit mathematical function and the other involving an implicit function derived from finite-element analysis, are presented.

\subsection{A Mathematical Function}
Consider a quadratic polynomial function
\begin{equation}
y=(a_{0}+a_{1}X_{1})(b_{0}+b_{1}X_{2})+(a_{0}+a_{1}X_{1})(c_{0}+c_{1}X_{3})+(b_{0}+b_{1}X_{2})(c_{0}+c_{1}X_{3})
\nonumber
\end{equation}
of a trivariate Gaussian random vector $\mathbf{X}=(X_{1},X_{2},X_{3})^{T}\in\mathbb{R}^{3}$,
which has mean $\mathbb{E}[\mathbf{X}]=\mathbf{0}\in\mathbb{R}^{3}$,
positive-definite covariance matrix
\[
\mathbf{\Sigma}_{\mathbf{X}}=\mathbb{E}\left[\mathbf{X}\mathbf{X}{}^{T}\right]=\left[\begin{array}{ccc}
\sigma_{1}^{2} & \rho_{12}\sigma_{1}\sigma_{2} & \rho_{13}\sigma_{1}\sigma_{3}\\
 & \sigma_{2}^{2} & \rho_{23}\sigma_{2}\sigma_{3}\\
(\mathrm{sym}.) &  & \sigma_{3}^{2}
\end{array}\right]\in\mathbb{R}^{3\times3},
\]
comprising variances $\sigma_{i}^{2}=1$ of $X_{i}$ for $i=1,2,3$
and correlation coefficients $\rho_{ij}$ between $X_{i}$ and $X_{j}$,
$i,j=1,2,3$, $i\ne j$, and joint probability density function described by (\ref{r9}) for $N=3$.
Four sets of correlation coefficients with varied strengths and types
of statistical dependence among random variables were examined: (1)
$\rho_{12}=\rho_{13}=\rho_{23}=0$ (uncorrelated); (2) $\rho_{12}=\rho_{13}=\rho_{23}=1/5$
(equally correlated); (3) $\rho_{12}=1/5$, $\rho_{13}=2/5$, $\rho_{23}=4/5$
(positively correlated); and (4) $\rho_{12}=-1/5$, $\rho_{13}=2/5$,
$\rho_{23}=-4/5$ (mixedly correlated). The deterministic parameters
are $a_{0}=b_{0}=c_{0}=2$, $a_{1}=b_{1}=c_{1}=1$, rendering $y$
a symmetric function. The objective of this simple yet insightful
example is to explain how the proposed methods can be applied to determine
the component functions of and global sensitivity indices from the
generalized ADD.

Given the Gaussian probability density function of $\mathbf{X}$,
the marginal probability densities of $\mathbf{X}_{u}$, $\emptyset\ne u\subseteq\{1,2,3\}$,
are also Gaussian, and is described by (\ref{r10}). The probability density
$\phi_{u}(\mathbf{x}_{u};\mathbf{\Sigma}_{u})$ induces multivariate Hermite orthonormal polynomials $\{\psi_{u\mathbf{j}_{|u|}}\}$, as described by (\ref{r4}) and (\ref{r5}), that are consistent with the probability measure of $\mathbf{X}_{u}$.
From these orthonormal polynomials and the function $y$, the integrals
$I_{u\mathbf{j}_{|u|}}$ and $J_{u\mathbf{j}_{|u|},v\mathbf{k}_{|v|}}$
were exactly calculated or determined from their definitions in (\ref{21a}) and
(\ref{21b}). Using Corollary \ref{corr:9}, that is, (\ref{24}), $S=2$,
$m=2$, and these two sets of integrals, a system of linear equations
was generated and then solved to determine exactly the expansion
coefficients $\tilde{C}_{u\mathbf{j}_{|u|}}$ for $\emptyset\ne u\subseteq\{1,2,3\}$
and $|\mathbf{j}_{|u|}|\le m$. Since $y$ is a sum of at most bivariate, second-order polynomials,
the selection of $S=2$ and $m=2$ is adequate to produce $\tilde{C}_{u\mathbf{j}_{|u|}}=C_{u\mathbf{j}_{|u|}}$, thereby exactly reproducing $y$ from the generalized ADD.

Table \ref{table1} presents all eight component functions of $y$,
obtained exactly using the proposed methods in Sections 5 and 6, for
four distinct cases of correlation properties of $\mathbf{X}$. It
is elementary to verify that all component functions have zero means
(Proposition \ref{prop:1} or \ref{prop:3}) and are either fully
orthogonal (Proposition \ref{prop:2}) for case 1 or hierarchically
orthogonal (Proposition \ref{prop:4}) for cases 2 through 4. When there is no
correlation between any two random variables, that is, $\rho_{12}=\rho_{13}=\rho_{23}=0$,
the proposed method replicates exactly the component functions of
the classical ADD. Clearly, the component functions vary with the
correlation structure, but when added together they reconstruct the
function $y$ regardless of whether or not the random variables are independent. It is important to note that the univariate parts of $y$, which are strictly linear functions of $X_{i}$, are exactly reproduced
only when there is no correlation between any two random variables, that
is, when invoking the classical ADD. In contrast, the univariate component
functions derived from the generalized ADD with a non-trivial correlation
structure contain second-order terms and are generally nonlinear.
This is due to statistical dependence among random variables, inducing
higher-order univariate terms that are not present in the original
function to begin with. A similar behavior is observed when comparing
the bivariate component functions in Table \ref{table1}. The additional
higher-order terms generated by dependent probability measures vanish
when summing all component functions of a generalized ADD. The mean
and variance of $y$ for all four cases, calculated using (\ref{26})
and (\ref{27}), are also displayed in Table \ref{table1}.

\begin{table}
\caption{The generalized ADD component functions and second-moment statistics
of $y$$^{(\text{a})}$ }

\begin{centering}
\def\arraystretch{1.4}
\begin{tabular}{c|c|c}
\hline
{\footnotesize Case} & {\footnotesize Component functions}  & {\footnotesize Moments} \tabularnewline
\hline
 & {\footnotesize $y_{\emptyset,G}=y_{\emptyset,C}=12$}  & \tabularnewline
 & {\footnotesize $y_{\{1\},G}=y_{\{1\},C}=4X_{1}$}  & \tabularnewline
 & {\footnotesize $y_{\{2\},G}=y_{\{2\},C}=4X_{2}$} & \tabularnewline
{\footnotesize Case 1: Uncorrelated}  & {\footnotesize $y_{\{3\},G}=y_{\{3\},C}=4X_{3}$} & {\footnotesize $\mu={\displaystyle 12}$} \tabularnewline
{\footnotesize $(\rho_{12}=\rho_{13}=\rho_{23}=0)$}  & {\footnotesize $y_{\{1,2\},G}=y_{\{1,2\},C}=X_{1}X_{2}$} & {\footnotesize $\sigma^{2}={\displaystyle 51}$} \tabularnewline
 & {\footnotesize $y_{\{1,3\},G}=y_{\{1,3\},C}=X_{1}X_{3}$} & \tabularnewline
 & {\footnotesize $y_{\{2,3\},G}=y_{\{2,3\},C}=X_{2}X_{3}$} & \tabularnewline
 & {\footnotesize $y_{\{1,2,3\},G}=y_{\{1,2,3\},C}=0$} & \tabularnewline
\hline
 & {\footnotesize $y_{\emptyset,G}={\displaystyle \frac{63}{5}}$}  & \tabularnewline
 & {\footnotesize $y_{\{1\},G}=-{\displaystyle \frac{5}{13}}+4X_{1}+{\displaystyle \frac{5}{13}}X_{1}^{2}$}  & \tabularnewline
 & {\footnotesize $y_{\{2\},G}=-{\displaystyle \frac{5}{13}}+4X_{2}+{\displaystyle \frac{5}{13}}X_{2}^{2}$} & \tabularnewline
{\footnotesize Case 2: Equally correlated} & {\footnotesize $y_{\{3\},G}=-{\displaystyle \frac{5}{13}}+4X_{3}+{\displaystyle \frac{5}{13}}X_{3}^{2}$} & {\footnotesize $\mu={\displaystyle \frac{63}{5}}$} \tabularnewline
{\footnotesize $(\rho_{12}=\rho_{13}=\rho_{23}=1/5)$}  & {\footnotesize $y_{\{1,2\},G}={\displaystyle \frac{12}{65}}-{\displaystyle \frac{5}{26}}X_{1}^{2}+X_{1}X_{2}-{\displaystyle \frac{5}{26}}X_{2}^{2}$} & {\footnotesize $\sigma^{2}={\displaystyle \frac{1794}{25}}$} \tabularnewline
 & {\footnotesize $y_{\{1,3\},G}={\displaystyle \frac{12}{65}}-{\displaystyle \frac{5}{26}}X_{1}^{2}+X_{1}X_{3}-{\displaystyle \frac{5}{26}}X_{3}^{2}$} & \tabularnewline
 & {\footnotesize $y_{\{2,3\},G}={\displaystyle \frac{12}{65}}-{\displaystyle \frac{5}{26}}X_{2}^{2}+X_{2}X_{3}-{\displaystyle \frac{5}{26}}X_{3}^{2}$} & \tabularnewline
 & {\footnotesize $y_{\{1,2,3\},G}=0$} & \tabularnewline
\hline
 & {\footnotesize $y_{\emptyset,G}={\displaystyle \frac{67}{5}}$}  & \tabularnewline
 & {\footnotesize $y_{\{1\},G}=-{\displaystyle \frac{405}{754}}+4X_{1}+{\displaystyle \frac{405}{754}}X_{1}^{2}$}  & \tabularnewline
 & {\footnotesize $y_{\{2\},G}=-{\displaystyle \frac{725}{1066}}+4X_{2}+{\displaystyle \frac{725}{1066}}X_{2}^{2}$} & \tabularnewline
{\footnotesize Case 3: Positively correlated} & {\footnotesize $y_{\{3\},G}=-{\displaystyle \frac{990}{1189}}+4X_{3}+{\displaystyle \frac{990}{1189}}X_{3}^{2}$} & {\footnotesize $\mu={\displaystyle \frac{67}{5}}$} \tabularnewline
{\footnotesize $(\rho_{12}=1/5,\rho_{13}=2/5,\rho_{23}=4/5)$} & {\footnotesize $y_{\{1,2\},G}={\displaystyle \frac{12}{65}}-{\displaystyle \frac{5}{26}}X_{1}^{2}+X_{1}X_{2}-{\displaystyle \frac{5}{26}}X_{2}^{2}$} & {\footnotesize $\sigma^{2}={\displaystyle \frac{2514}{25}}$} \tabularnewline
 & {\footnotesize $y_{\{1,3\},G}={\displaystyle \frac{42}{145}}-{\displaystyle \frac{10}{29}}X_{1}^{2}+X_{1}X_{3}-{\displaystyle \frac{10}{29}}X_{3}^{2}$} & \tabularnewline
 & {\footnotesize $y_{\{2,3\},G}={\displaystyle \frac{36}{205}}-{\displaystyle \frac{20}{41}}X_{2}^{2}+X_{2}X_{3}-{\displaystyle \frac{20}{41}}X_{3}^{2}$} & \tabularnewline
 & {\footnotesize $y_{\{1,2,3\},G}=0$} & \tabularnewline
\hline
 & {\footnotesize $y_{\emptyset,G}={\displaystyle \frac{57}{5}}$}  & \tabularnewline
 & {\footnotesize $y_{\{1\},G}=-{\displaystyle \frac{115}{754}}+4X_{1}+{\displaystyle \frac{115}{754}}X_{1}^{2}$}  & \tabularnewline
 & {\footnotesize $y_{\{2\},G}={\displaystyle \frac{725}{1066}}+4X_{2}-{\displaystyle \frac{725}{1066}}X_{2}^{2}$} & \tabularnewline
{\footnotesize Case 4: Mixedly correlated} & {\footnotesize $y_{\{3\},G}={\displaystyle \frac{170}{1189}}+4X_{3}-{\displaystyle \frac{170}{1189}}X_{3}^{2}$} & {\footnotesize $\mu={\displaystyle \frac{57}{5}}$} \tabularnewline
{\footnotesize $(\rho_{12}=-1/5,\rho_{13}=2/5,\rho_{23}=-4/5)$} & {\footnotesize $y_{\{1,2\},G}=-{\displaystyle \frac{12}{65}}+{\displaystyle \frac{5}{26}}X_{1}^{2}+X_{1}X_{2}+{\displaystyle \frac{5}{26}}X_{2}^{2}$} & {\footnotesize $\sigma^{2}={\displaystyle \frac{774}{25}}$} \tabularnewline
 & {\footnotesize $y_{\{1,3\},G}={\displaystyle \frac{42}{145}}-{\displaystyle \frac{10}{29}}X_{1}^{2}+X_{1}X_{3}-{\displaystyle \frac{10}{29}}X_{3}^{2}$} & \tabularnewline
 & {\footnotesize $y_{\{2,3\},G}=-{\displaystyle \frac{36}{205}}+{\displaystyle \frac{20}{41}}X_{2}^{2}+X_{2}X_{3}+{\displaystyle \frac{20}{41}}X_{3}^{2}$} & \tabularnewline
 & {\footnotesize $y_{\{1,2,3\},G}=0$} & \tabularnewline
\hline
\end{tabular}
\par\end{centering}

{\footnotesize (a) $y=12+4X_{1}+4X_{2}+4X_{3}+X_{1}X_{2}+X_{1}X_{3}+X_{2}X_{3}$,
where $a_{0}=b_{0}=c_{0}=2$, $a_{1}=b_{1}=c_{1}=1$.}{\footnotesize \par}

\label{table1}
\end{table}

The component functions listed in Table \ref{table1} were employed
for calculating the variance-driven, covariance-driven, and total
global sensitivity indices defined in (\ref{28}), (\ref{29}), and
(\ref{30}). The expectations involved in (\ref{28}) and (\ref{29})
were exactly evaluated from their respective integral definitions.
Table \ref{table2} enumerates the triplets of sensitivity indices,
$S_{u,v}$, $S_{u,c}$, and $S_{u}$, of $y$ for $X_{1}$, $X_{2}$,
$X_{3}$, $(X_{1},X_{2})$, $(X_{1},X_{3})$, $(X_{2},X_{3})$, and
$(X_{1},X_{2},X_{3})$, calculated separately for the uncorrelated
case and the three correlated cases. Three key findings jump out
as follows. First, the total sensitivity indices
from the generalized ADD for all three correlated cases comprise both
variance- and covariance-driven contributions, whereas the total sensitivity
indices from the generalized ADD for the uncorrelated case or from
the classical ADD emanate solely from the variances of component functions.
Second, for the mixedly correlated case, the sum of the variance-driven
indices may exceed unity, while the sum of the covariance-driven indices
may be negative, as specifically demonstrated when $\rho_{12}=-1/5$,
$\rho_{13}=2/5$, $\rho_{23}=-4/5$. Third, the stronger the correlations
among random variables, the larger the covariance-driven contributions
to the total sensitivity indices.

Using the total sensitivity indices in Table \ref{table1}, the total
effect sensitivity indices of $y$ with respect to $X_{i}$, defined
as $\bar{S}_{i}:=\sum_{i\in u}S_{u}$, $i=1,2,3$, were calculated
to decipher the importance of each random variable. Table \ref{table3}
displays the total effect sensitivity indices with respect to three
random variables for the four cases of correlation properties. The
parenthetical numbers indicate relative rankings of all three variables,
except when there is a tie. For identical correlation structures,
such as the uncorrelated and equally correlated cases, all three variables
are equally important, yielding a three-way tie, as $y$ is a symmetric
function. For the positively correlated case, where the correlation
coefficient increases monotonically from $1/5$ to $4/5$, $X_{1}$
and $X_{3}$ are the least and the most important variables, respectively,
while the significance of $X_{2}$ is intermediary. The order of ranking
should reverse if the correlation coefficient decreases monotonically.
When the correlation coefficients are both positive and negative,
that is, for the mixedly correlated case, $X_{1}$ and $X_{2}$ become
the most and the least important variables, respectively. Clearly,
the correlation structure of random variables heavily influences the
composition of component functions as well as global sensitivity analysis.

\begin{table}
\caption{Triplets of global sensitivity indices from the generalized ADD of
$y$$^{(\text{a})}$ }

\begin{centering}
\begin{tabular}{c|ccc|ccc}
\hline
 & {\footnotesize $S_{u,v}$ } & {\footnotesize $S_{u,c}$ } & {\footnotesize $S_{u}$ } & {\footnotesize $S_{u,v}$ } & {\footnotesize $S_{u,c}$ } & {\footnotesize $S_{u}$ }\tabularnewline
\cline{2-7}
 & \multicolumn{3}{c|}{{\footnotesize Case 1: Uncorrelated }} & \multicolumn{3}{c}{{\footnotesize Case 2: Equally correlated}}\tabularnewline
{\footnotesize $\mathbf{X}_{u}$ } & \multicolumn{3}{c|}{{\footnotesize $(\rho_{12}=\rho_{13}=\rho_{23}=0)$ }} & \multicolumn{3}{c}{{\footnotesize $(\rho_{12}=\rho_{13}=\rho_{23}=1/5)$ }}\tabularnewline
\hline
{\footnotesize $X_{1}$ } & {\footnotesize 0.313725} & {\footnotesize 0} & {\footnotesize 0.313725} & {\footnotesize 0.227088} & {\footnotesize 0.089780} & {\footnotesize 0.316868}\tabularnewline
{\footnotesize $X_{2}$ } & {\footnotesize 0.313725} & {\footnotesize 0} & {\footnotesize 0.313725} & {\footnotesize 0.227088} & {\footnotesize 0.089780} & {\footnotesize 0.316868}\tabularnewline
{\footnotesize $X_{3}$ } & {\footnotesize 0.313725} & {\footnotesize 0} & {\footnotesize 0.313725} & {\footnotesize 0.227088} & {\footnotesize 0.089780} & {\footnotesize 0.316868}\tabularnewline
{\footnotesize $(X_{1},X_{2})$ } & {\footnotesize 0.019608} & {\footnotesize 0} & {\footnotesize 0.019608} & {\footnotesize 0.012349} & {\footnotesize 0.004116} & {\footnotesize 0.016465}\tabularnewline
{\footnotesize $(X_{1},X_{3})$ } & {\footnotesize 0.019608} & {\footnotesize 0} & {\footnotesize 0.019608} & {\footnotesize 0.012349} & {\footnotesize 0.004116} & {\footnotesize 0.016465}\tabularnewline
{\footnotesize $(X_{2},X_{3})$ } & {\footnotesize 0.019608} & {\footnotesize 0} & {\footnotesize 0.019608} & {\footnotesize 0.012349} & {\footnotesize 0.004116} & {\footnotesize 0.016465}\tabularnewline
{\footnotesize $(X_{1},X_{2},X_{3})$ } & {\footnotesize 0} & {\footnotesize 0} & {\footnotesize 0} & {\footnotesize 0} & {\footnotesize 0} & {\footnotesize 0}\tabularnewline
\hline \hline
{\footnotesize $\sum$ } & {\footnotesize 1} & {\footnotesize 0} & {\footnotesize 1} & {\footnotesize 0.718312} & {\footnotesize 0.281688} & {\footnotesize 1}\tabularnewline
\hline
 & \multicolumn{3}{c|}{{\footnotesize Case 3: Positively correlated}} & \multicolumn{3}{c}{{\footnotesize Case 4: Mixedly correlated}}\tabularnewline
 & \multicolumn{3}{c|}{{\footnotesize $(\rho_{12}=1/5,\rho_{13}=2/5,\rho_{23}=4/5)$}} & \multicolumn{3}{c}{{\footnotesize $(\rho_{12}=-1/5,\rho_{13}=2/5,\rho_{23}=-4/5)$}}\tabularnewline
\hline
{\footnotesize $X_{1}$ } & {\footnotesize 0.164847} & {\footnotesize 0.096992} & {\footnotesize 0.261839} & {\footnotesize 0.518299} & {\footnotesize 0.103039} & {\footnotesize 0.621337}\tabularnewline
{\footnotesize $X_{2}$ } & {\footnotesize 0.168309} & {\footnotesize 0.165600} & {\footnotesize 0.333909} & {\footnotesize 0.546677} & {\footnotesize -0.509771} & {\footnotesize 0.036905}\tabularnewline
{\footnotesize $X_{3}$ } & {\footnotesize 0.172897} & {\footnotesize 0.202314} & {\footnotesize 0.375211} & {\footnotesize 0.518116} & {\footnotesize -0.201389} & {\footnotesize 0.316728}\tabularnewline
{\footnotesize $(X_{1},X_{2})$ } & {\footnotesize 0.008812} & {\footnotesize 0.008812} & {\footnotesize 0.017624} & {\footnotesize 0.028623} & {\footnotesize -0.014311} & {\footnotesize 0.014311}\tabularnewline
{\footnotesize $(X_{1},X_{3})$ } & {\footnotesize 0.006049} & {\footnotesize 0.004321} & {\footnotesize 0.010370} & {\footnotesize 0.019647} & {\footnotesize -0.014034} & {\footnotesize 0.005614}\tabularnewline
{\footnotesize $(X_{2},X_{3})$ } & {\footnotesize 0.000786} & {\footnotesize 0.000262} & {\footnotesize 0.001048} & {\footnotesize 0.002553} & {\footnotesize 0.002553} & {\footnotesize 0.005105}\tabularnewline
{\footnotesize $(X_{1},X_{2},X_{3})$ } & {\footnotesize 0} & {\footnotesize 0} & {\footnotesize 0} & {\footnotesize 0} & {\footnotesize 0} & {\footnotesize 0}\tabularnewline
\hline \hline
{\footnotesize $\sum$ } & {\footnotesize 0.5217} & {\footnotesize 0.4783} & {\footnotesize 1} & {\footnotesize 1.63391} & {\footnotesize -0.63391} & {\footnotesize 1}\tabularnewline
\hline
\end{tabular}
\par\end{centering}

{\footnotesize ~~~(a) $y=12+4X_{1}+4X_{2}+4X_{3}+X_{1}X_{2}+X_{1}X_{3}+X_{2}X_{3}$,
where $a_{0}=b_{0}=c_{0}=2$, }{\footnotesize \par}

{\footnotesize ~~~~~~~~$a_{1}=b_{1}=c_{1}=1$.}{\footnotesize \par}

\label{table2}
\end{table}

\begin{table}
\caption{Total effects of random variables on the variance of $y$ and relative
rankings$^{(\text{a})}$ }

\begin{centering}
\begin{tabular}{c|c|c|c}
\hline
 & {\footnotesize $\bar{S}{}_{1}$ } & {\footnotesize $\bar{S}_{2}$ } & {\footnotesize $\bar{S}{}_{3}$ }\tabularnewline
{\footnotesize Case} & {\footnotesize (rank)} & {\footnotesize (rank)} & {\footnotesize (rank)}\tabularnewline
\hline
{\footnotesize Case 1: Uncorrelated } & {\footnotesize 0.352941} & {\footnotesize 0.352941} & {\footnotesize 0.352941}\tabularnewline
{\footnotesize $(\rho_{12}=\rho_{13}=\rho_{23}=0)$ } & {\footnotesize (Three-way tie)} & {\footnotesize (Three-way tie)} & {\footnotesize (Three-way tie)}\tabularnewline
\hline
{\footnotesize Case 2: Equally correlated} & {\footnotesize 0.349798} & {\footnotesize 0.349798} & {\footnotesize 0.349798}\tabularnewline
{\footnotesize $(\rho_{12}=\rho_{13}=\rho_{23}=1/5)$ } & {\footnotesize (Three-way tie)} & {\footnotesize (Three-way tie)} & {\footnotesize (Three-way tie)}\tabularnewline
\hline
{\footnotesize Case 3: Positively correlated} & {\footnotesize 0.289833} & {\footnotesize 0.352581} & {\footnotesize 0.386628}\tabularnewline
{\footnotesize $(\rho_{12}=1/5,\rho_{13}=2/5,\rho_{23}=4/5)$} & {\footnotesize (3)} & {\footnotesize (2)} & {\footnotesize (1)}\tabularnewline
\hline
{\footnotesize Case 4: Mixedly correlated} & {\footnotesize 0.641262} & {\footnotesize 0.056322} & {\footnotesize 0.327446}\tabularnewline
{\footnotesize $(\rho_{12}=-1/5,\rho_{13}=2/5,\rho_{23}=-4/5)$} & {\footnotesize (1)} & {\footnotesize (3)} & {\footnotesize (2)}\tabularnewline
\hline
\end{tabular}
\par\end{centering}

{\footnotesize ~~(a) The total effect of random variable $X_{i}$
is defined as $\bar{S}_{i}:=\sum_{i\in u}S_{u}$, $i=1,2,3$, $\emptyset\ne u\subseteq\{1,2,3\}$.}{\footnotesize \par}

\label{table3}
\end{table}

\subsection{A Random Eigenvalue Problem}
The next example is motivated on solving a practical problem, involving uncertainty quantification of natural frequencies of a vibrating cantilever plate, as shown in Figure \ref{figure1}(a).  The plate has the following deterministic geometric and material properties:  length $L=2$ in (50.8 mm), width $W=1$ in (25.4 mm), Young's modulus, $E=30\times 10^6$ psi (206.8 GPa), Poisson's ratio $\nu=0.3$, and mass density $\rho=7.324\times 10^{-4}$ lb-s$^2$/in$^4$ (7827 kg/mm$^3$). The randomness in natural frequencies arises due to random thickness $t(\xi)$, which is spatially varying in the longitudinal direction $\xi$ only.  The thickness is represented by a homogeneous, lognormal random field $t(\xi)=c\exp[\alpha(\xi)]$ with mean $\mu_t=0.01$ in (0.254 mm), variance $\sigma_t^2=v_t^2 \mu_t^2$, and coefficient of variation $v_t=0.2$, where $c=\mu_t/\sqrt{1+v_t^2}$ and $\alpha(\xi)$ is a zero-mean, homogeneous, Gaussian random field with variance $\sigma_\alpha^2=\ln(1+v_t^2)$ and covariance function $\Gamma_\alpha(\tau)=\mathbb{E}[\alpha(\xi)\alpha(\xi+\tau)=\sigma_\alpha^2 \exp[-|\tau|/(0.2L)]$.  A $10 \times 20$ finite-element mesh of the plate, consisting of 200 eight-noded, second-order shell elements and 661 nodes, is shown in Figure \ref{figure1}(b).  Using this mesh and the well-known midpoint method, the random field $\alpha(\xi)$ was discretized into a zero-mean, 20-dimensional, dependent Gaussian random vector $\mathbf{X}$ with covariance matrix  $\mathbf{\Sigma}_{\mathbf{X}}=[\Gamma_\alpha(\xi_i-\xi_j)]$, $i,j=1,\cdots,20$, where $\xi_i$ is the coordinate of the center of the $i$th column of elements in Figure \ref{figure1}(b).  The same mesh was used to calculate the natural frequencies, which are square-root of eigenvalues.

\begin{figure}[h]
\begin{centering}
\includegraphics[scale=0.6]{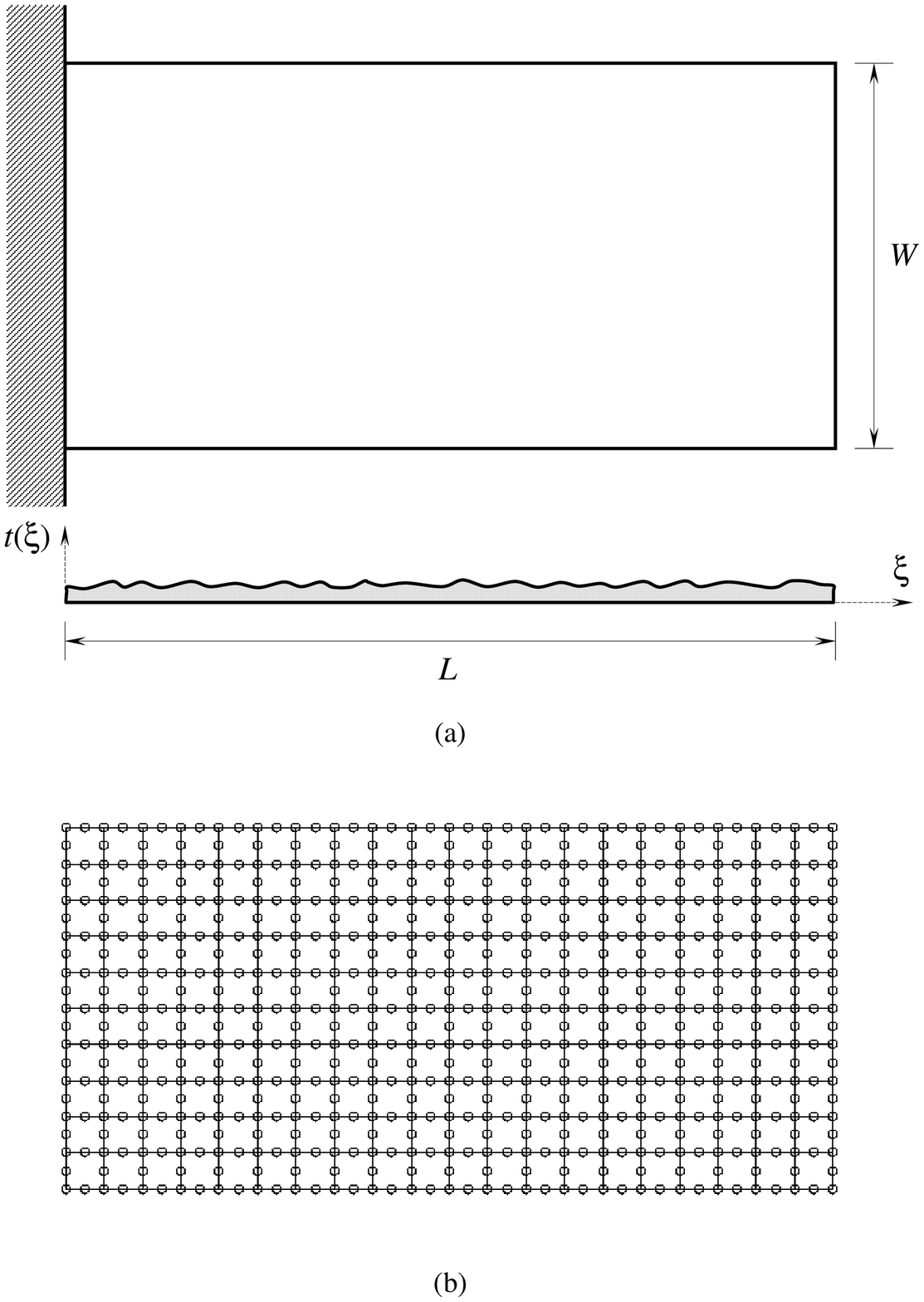}
\par\end{centering}
\caption{A cantilever plate; (a) geometry; (b) finite-element discrete model}
\label{figure1}
\end{figure}

The bivariate, second-order approximation and the bivariate, fourth-order approximation, that is, (\ref{r7}) truncated at $S=2$, $m=2$ and $S=2$, $m=4$, respectively, were employed to estimate various probabilistic characteristics of the first four natural frequencies of the plate. The construction of orthonormal polynomials is identical to that in the first example. However, the integrals involved in determining the coefficients of (\ref{r7}) were estimated from the dimension-reduction integration scheme \cite{xu04}, entailing at most two-dimensional integrations.  For the three-point ($=m+1$, $m=2$) and five-point ($=m+1$, $m=4$) Gauss-Hermite quadrature rules selected, the two proposed approximations require $20\times(20-1)(3-1)^2/2+(20\times(3-1)+1=801$ and $20\times(20-1)(5-1)^2/2+(20\times(5-1)+1=3121$
finite-element analyses, respectively \cite{xu04}.

Table \ref{table4} presents the means and standard deviations of the first four natural frequencies, $\omega_i$, $i=1,\cdots,6$, of the plate by three different methods: the two proposed bivariate approximations and crude Monte Carlo simulation (MCS).  In all three methods, the solution of the matrix characteristic equation for a given input is equivalent to performing a finite-element analysis.  Therefore, computational efficiency, even for this simple plate model, is a practical requirement in solving random eigenvalue problems.  The statistics by the proposed methods were obtained using 5000 samples of (\ref{r7}), which consist of repeated yet inexpensive evaluations of elementary functions. Due to expensive finite-element analysis, however, crude MCS was conducted only up to 5000 realizations, which should be adequate for providing benchmark solutions of the second-moment characteristics. The agreement between the means and standard deviations by the proposed methods and crude MCS in Table \ref{table4} is very good even for the second-order approximation.

\begin{table}
\caption{Means and standard deviations of the first four natural frequencies of the cantilever plate}
\begin{centering}
\begin{tabular}{cccccccccc}
\hline
 &  & \multicolumn{2}{c}{{\footnotesize $\begin{array}{c}
\mathrm{Generalized\: ADD}\\
(S=2,\: m=2)
\end{array}$}} &  & \multicolumn{2}{c}{{\footnotesize $\begin{array}{c}
\mathrm{Generalized\: ADD}\\
(S=2,\: m=4)
\end{array}$}} &  & \multicolumn{2}{c}{{\footnotesize $\begin{array}{c}
\mathrm{Crude\: MCS}\\
(5000\:\mathrm{samples})
\end{array}$}}\tabularnewline
\cline{3-4} \cline{6-7} \cline{9-10}
{\footnotesize Mode} &  & {\footnotesize $\begin{array}{c}
\mathrm{Mean}\\
(\mathrm{Hz})
\end{array}$} & {\footnotesize $\begin{array}{c}
\mathrm{St.\, dev.}\\
(\mathrm{Hz})
\end{array}$} &  & {\footnotesize $\begin{array}{c}
\mathrm{Mean}\\
(\mathrm{Hz})
\end{array}$} & {\footnotesize $\begin{array}{c}
\mathrm{St.\, dev.}\\
(\mathrm{Hz})
\end{array}$} &  & {\footnotesize $\begin{array}{c}
\mathrm{Mean}\\
(\mathrm{Hz})
\end{array}$} & {\footnotesize $\begin{array}{c}
\mathrm{St.\, dev.}\\
(\mathrm{Hz})
\end{array}$}\tabularnewline
\hline
{\footnotesize 1} &  & {\footnotesize 80.94} & {\footnotesize 15.68} &  & {\footnotesize 80.98} & {\footnotesize 15.97} &  & {\footnotesize 80.75} & {\footnotesize 17.06}\tabularnewline
{\footnotesize 2} &  & {\footnotesize 355.13} & {\footnotesize 54.21} &  & {\footnotesize 355.16} & {\footnotesize 55.22} &  & {\footnotesize 355.45} & {\footnotesize 55.31}\tabularnewline
{\footnotesize 3} &  & {\footnotesize 508.27} & {\footnotesize 64.21} &  & {\footnotesize 508.43} & {\footnotesize 64.98} &  & {\footnotesize 508.20} & {\footnotesize 68.58}\tabularnewline
{\footnotesize 4} &  & {\footnotesize 1169.63} & {\footnotesize 137.45} &  & {\footnotesize 1169.83} & {\footnotesize 140.64} &  & {\footnotesize 1170.55} & {\footnotesize 142.29}\tabularnewline
\hline
\end{tabular}
\par\end{centering}
\label{table4}
\end{table}

Figure \ref{figure2} depicts the marginal probability densities of the four natural frequencies by the proposed approximations and crude MCS.  Due to the computational expense inherent to finite-element analysis, the same 5000 samples generated for verifying the statistics in Table \ref{table4} were utilized to develop the histograms in Figure \ref{figure2}.  However, since the proposed methods yield explicit eigenvalue approximations, an arbitrarily large sample size, e.g., 50,000 in this particular example, was selected to sample (\ref{r7}) for estimating the respective densities.  Again, the results of the proposed methods and crude MCS match well, given the relatively small sample size of crude MCS.  Nonetheless, there exist slight discrepancies in the tail regions of a few densities, suggesting a need for improvements by invoking higher-variate approximations.

\begin{figure}[h]
\begin{centering}
\includegraphics[scale=0.63]{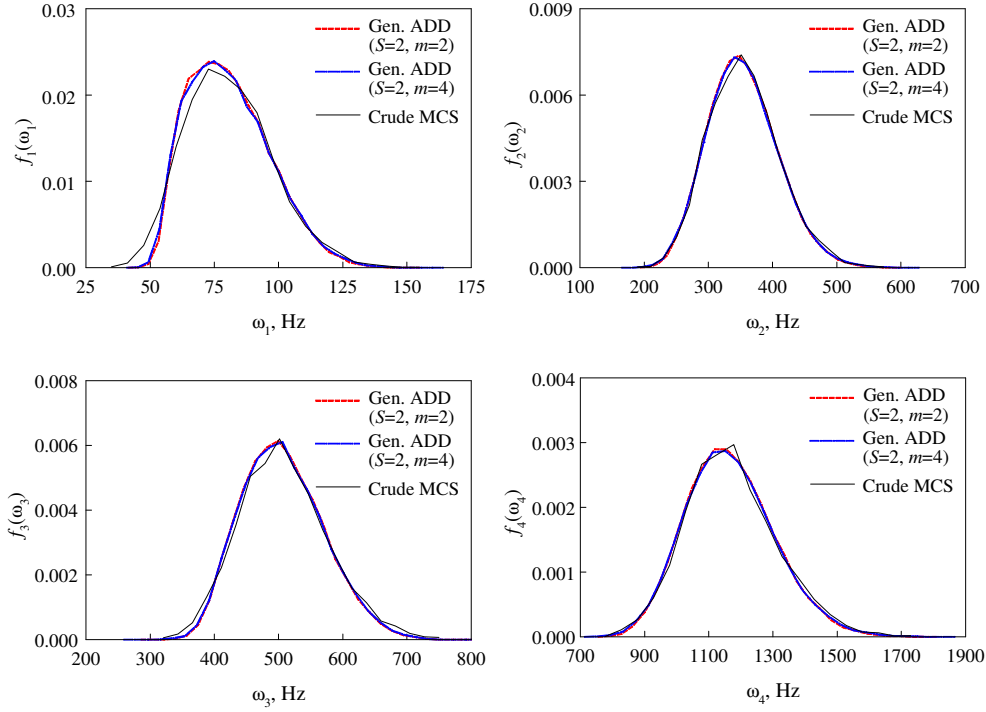}
\par\end{centering}
\caption{Marginal probability density functions of the first four natural frequencies of the cantilever plate}
\label{figure2}
\end{figure}

The proposed methods, especially the bivariate, second-order approximation, are computationally more efficient than crude MCS.  Comparing the results of Table \ref{table4}, the fourth-order approximation produces at most a modest improvement in the second-moment properties by the second-order approximation. Moreover, the respective marginal densities obtained by both approximations, see Figure \ref{figure2}, are practically coincident. Therefore, the second-order approximation provides satisfactory results without incurring the significantly higher cost of the fourth-order approximation, at least, for this example.  Having said so, the cost scaling of a bivariate approximation, whether second-order or fourth-order, is still quadratic with respect to the number of random variables.  Therefore, future efforts on developing adaptive-sparse approximations, where global sensitivity indices can be used to filter out unimportant component functions, should be explored.

\section{Conclusion}

A generalized ADD for dependent random variables, representing a finite
sum of lower-dimensional component functions of a multivariate function,
was studied. The classical annihilating conditions, when appropriately
weakened, reveal two important properties of the generalized ADD:
the component functions have \emph{zero} means and are hierarchically
orthogonal. A simple, alternative approach is proposed for deriving
the coupled system of equations satisfied by the component functions.
The coupled equations, which subsume as a special case the classical
ADD, reproduces the component functions for independent probability
measures. By exploiting measure-consistent, multivariate orthogonal
polynomials as bases, a new constructive method is proposed for determining
the component functions of the generalized ADD. The method leads to
a coupled system of linear algebraic equations for the expansion coefficients
of the component functions that is not only easy to implement and
solve, but also supports scalability to higher dimensions. New generalized
formulae are presented for the second-moment characteristics of a
general stochastic function, including three distinct global sensitivity
indices, relevant to dependent probability distributions. Analogous
to the component functions, the generalized formulae shrink to the
existing formulae from the classical ADD when the random variables
are independent. Gaining insights from the generalized ADD, two generalized
effective dimensions, one in the superposition sense and the other
in the truncation sense, are defined. Numerical results from a simple
yet insightful example indicate that the statistical dependence among
random variables induces higher-order terms in the generalized ADD
that may not be present in the original function or in the classical
ADD. In addition, the component functions depend significantly on
the correlation coefficients of random variables. Consequently, the
global sensitivity indices may also vary widely, producing distinct
rankings of random variables. Finally, an application to solving random eigenvalue problems
demonstrates that the proposed approximation provides not only accurate, but also computationally efficient, estimates of the statistical moments and probability densities of natural frequencies.

\section*{{\small Acknowledgments}}

The author wishes to acknowledge financial support from the U.S.
National Science Foundation under Grant Nos. CMMI-0969044 and CMMI-1130147.  Thanks to the three anonymous reviewers and the associate editor for providing important comments, which led to an improved, final version of the paper. The author benefited from discussions with Professors Giles Hooker (Cornell University), Palle Jorgensen (The University of Iowa), and Yuan Xu (University of Oregon).

\end{document}